\newtheorem{definition}{Definition}
\newtheorem{theorem}{Theorem}
\newtheorem{conjecture}{Conjecture}
\newtheorem{fact}{Fact}
\newcommand{\F}{\mathbb{F}}
\begin{document}

\title{Definably simple stable groups with finitary groups of automorphisms}

\author{Ulla Karhum\"{a}ki}

\affil{School of Mathematics, University of Manchester, UK\\
\texttt{ulla.karhumaki@manchester.ac.uk }}

\pagestyle{fancy}
\fancyhead{} 
\fancyhead[RO,LE]{\thepage}
\fancyhead[LO]{DEFINABLY SIMPLE STABLE GROUPS}
\fancyhead[RE]{Ulla Karhum\"{a}ki}
\fancyfoot{}


\maketitle

\begin{abstract}

We prove that infinite definably simple locally finite groups of finite centraliser dimension are simple groups of Lie type over locally finite fields. Then, we identify conditions on automorphisms of a stable group that make it resemble the Frobenius maps, and allow us to classify definably simple stable groups in the specific case when they admit such automorphisms.

\end{abstract}

\section{Introduction}\label{section:introduction}

This paper combines two different trains of investigations; on the one hand we continue the work on locally finite groups following Thomas \cite{thomas21983} and on the other hand we classify definably simple stable groups which admit groups of (non-definable) automorphisms which resemble Frobenius maps, and, in particular, have finite groups of fixed points. We call such groups \emph{finitary groups of automorphisms}; the definitions are given in Sections \ref{sec:premlim} and~\ref{sec:finitary}.

We start with generalising the classical result by Thomas \cite{thomas21983} about infinite simple locally finite groups of finite centraliser dimension. We prove the following:

\begin{theorem} \label{th:definably-simple}
An infinite definably simple locally finite group of finite centraliser dimension is a simple group of Lie type over a locally finite field.
\end{theorem}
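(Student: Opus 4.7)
My strategy is to reduce to Thomas's original theorem by showing that, in this setting, the normal subgroup structure of $G$ is forced to coincide with its definable normal subgroup structure. Concretely, I would adapt the Kegel cover machinery underlying Thomas's argument and exploit the fact that finite centraliser dimension makes many canonically attached subgroups (centralisers, their intersections, their normalisers) automatically definable.

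\textbf{Step 1: Kegel cover and CFSG.} Since $G$ is locally finite and infinite, it admits a Kegel cover, i.e.\ a directed system of pairs $(H_i, M_i)$ with $H_i \le G$ finite, $M_i \trianglelefteq H_i$, and $S_i := H_i/M_i$ a non-abelian finite simple group that ``sees'' any prescribed finite subset of $G$. Finite centraliser dimension is inherited by subgroups and by quotients in which the kernel is controlled by centralisers, so a uniform bound on centraliser dimension passes to the sections $S_i$. The Classification of Finite Simple Groups (in the form used by Thomas) tells us that a finite simple group of centraliser dimension at most $d$ is either of bounded order or a finite group of Lie type of bounded Lie rank $r = r(d)$. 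Passing to a subsystem, I may assume all $S_i$ are Chevalley groups of a fixed rank $r$ over finite fields $\mathbb{F}_{q_i}$.

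\textbf{Step 2: Identifying a Lie-type structure inside $G$.} Following the Thomas/Belyaev/Borovik line, the bounded-rank Chevalley sections yield, inside $G$, a ``limit'' system of definable subgroups (tori, unipotent radicals, root subgroups) whose unions are candidates for the Chevalley subgroups of a group of Lie type over the locally finite field $k = \varinjlim \mathbb{F}_{q_i}$. Here finite centraliser dimension is used twice: first to ensure that these candidate subgroups, built from centralisers and their normalisers, are definable, and second to force the chains arising in the construction to stabilise uniformly.

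\textbf{Step 3: Definable simplicity closes the gap.} In Thomas's original argument, simplicity of $G$ is used at the end to discard unwanted normal subgroups and to conclude that $G$ is the full group of Lie type. In our weaker setting, the normal subgroups that need to be killed are all produced by the above construction from centralisers, hence are \emph{definable}; definable simplicity then forces them to be trivial or all of $G$, giving exactly the input Thomas needs. This identifies $G$ with a simple group of Lie type of rank $r$ over $k$.

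\textbf{Main obstacle.} The hard part is Step 3: verifying that no ``invisible'' (non-definable) normal subgroup can obstruct the identification, i.e.\ that every normal subgroup that arises naturally from the Kegel analysis is definable and therefore controlled by definable simplicity. This amounts to showing that the candidate Chevalley subgroup generated by the limit of root subgroups, together with its normaliser, is definable and normal in $G$ — at which point definable simplicity finishes the proof.
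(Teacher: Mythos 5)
Your plan has a genuine gap at the point you yourself flag as the main obstacle, Step 3, and the missing idea is precisely the one the paper's argument turns on. You assert that the normal subgroups needing to be discarded are ``produced from centralisers, hence definable,'' but the candidate Chevalley subgroup is \emph{generated} by a limit system of root subgroups, and a subgroup generated by definable sets is not in general definable. What actually makes the relevant normal subgroup definable is the \emph{uniform simplicity} of simple groups of Lie type (Ellers--Gordeev): every element is a product of a uniformly bounded number of conjugates of any fixed non-trivial element. This bounded width expresses the subgroup as a finite product of conjugacy classes, i.e.\ as a definable \emph{set}, and only then can definable simplicity be applied to conclude it equals $G$. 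Without a bounded-generation statement of this kind your Step 3 does not close; the ``invisible'' non-definable normal subgroup you worry about is exactly what is not excluded. A secondary soft spot is Step 1: centraliser dimension is inherited by subgroups but not by arbitrary quotients, so the claim that the bound passes to the Kegel sections $S_i$ needs a real argument rather than the parenthetical you give.

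The route is also heavier than necessary. Rather than re-running Thomas's Kegel-cover construction, the paper quotes the structure theorem for locally finite groups of finite centraliser dimension (Borovik--Karhum\"aki, Buturlakin): there is a normal series $1 \unlhd S \unlhd L \unlhd H$ with $S$ solvable, $L/S$ a finite direct product $\overline{L}_1\times\cdots\times\overline{L}_m$ of simple groups of Lie type over locally finite fields, and $H/L$ finite. Only three things then need checking, each by exhibiting a definable normal subgroup: $S=1$ (via $Z(C_H(A))$ for a non-trivial abelian normal $A$), $L=H$ (via the bounded-width argument above), and $m=1$ (via $C_H(C_H(L_1))$). Your proposal would in effect have to reprove that structure theorem and would \emph{still} need the uniform-simplicity input to finish.
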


Proof of Theorem~\ref{th:definably-simple} uses the Classification of Finite Simple Groups (CFSG).

Then we study definably simple stable groups which admit a finitary group of automorphisms.

\begin{theorem}\label{stable-extension} Every infinite definably simple stable group admitting a finitary group of automorphisms is a Chevalley group over an algebraically closed field of positive characteristic.
\end{theorem}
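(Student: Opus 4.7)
My plan is to reduce Theorem~\ref{stable-extension} to Theorem~\ref{th:definably-simple} by using the finitary automorphisms to equip $G$ with a locally finite skeleton, and then sharpening the field of definition to be algebraically closed.

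First, from the defining properties of a finitary group of automorphisms $\Sigma$ I would extract a directed family $(\sigma_i)\subseteq\Sigma$ whose fixed-point subgroups $C_G(\sigma_i)$ are all finite and whose directed union equals $G$ (mirroring the Frobenius tower $\{x\mapsto x^{p^n}\}$ on a Chevalley group over $\overline{\F_p}$). Then $G$ is a directed union of finite subgroups, hence locally finite. Since $G$ is stable, the chain condition on centralisers gives $G$ finite centraliser dimension, so Theorem~\ref{th:definably-simple} applies and yields $G\cong X(k)$ for a simple group of Lie type $X$ and a locally finite field $k$ of some characteristic $p>0$.

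The main obstacle is upgrading $k$ to $\overline{\F_p}$. My approach is to track the $\Sigma$-action through the identification $G\cong X(k)$: each $\sigma_i$ should act on $X(k)$ essentially as a power of the field-Frobenius (possibly composed with a graph automorphism in the twisted cases), so its fixed subgroup $C_G(\sigma_i)$ is identified with $X$ evaluated at a finite subfield $k_i\subseteq k$. As the $C_G(\sigma_i)$ cover $G$, the $k_i$ cover $k$; and the finitary hypothesis should guarantee that the $|k_i|$ are unbounded, which forces $k\supseteq\overline{\F_p}$ and hence $k=\overline{\F_p}$. A subtlety to handle is the twisted Lie types, where one must verify that the fixed subfields still exhaust the algebraic closure once the graph factor is peeled off.

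Finally, over an algebraically closed field there are no nontrivial twisted forms among simple groups of Lie type, so $G=X(\overline{\F_p})$ is a Chevalley group over an algebraically closed field of positive characteristic, as required.
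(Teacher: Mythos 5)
There is a genuine gap at the very first step: you assert that the fixed-point subgroups $C_G(\sigma_i)$ cover $G$, so that $G$ itself is a directed union of finite subgroups and hence locally finite. Nothing in the definition of a finitary group of automorphisms gives this. Condition (2) only guarantees that every invariant definable set contains \emph{some} point with finite $A$-orbit; it does not say that every element of $G$ has a finite orbit. Indeed the conclusion of the theorem allows $G$ to be $X(K)$ for an arbitrary algebraically closed $K$ of characteristic $p$, which is certainly not locally finite. What the hypotheses actually yield is a locally finite \emph{core} $G_* = \{g \in G \mid g^A \text{ is finite}\}$, and the real content of the argument (the paper's Theorem~\ref{elementary-substructure}, proved via the Tarski--Vaught test) is that $G_*$ is an \emph{elementary substructure} of $G$. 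One then applies Theorem~\ref{th:definably-simple} to $G_*$ (which inherits definable simplicity and finite centraliser dimension from $G$), and transfers the conclusion back to $G$ using the first-order axiomatisability of Chevalley groups of fixed type (Fact~\ref{Fact:thomas-Chevalley}) together with Facts~\ref{point} and~\ref{ryten}. Your proposal skips the elementary-substructure step entirely, and without it the reduction to Theorem~\ref{th:definably-simple} at best classifies $G_*$, not $G$.

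The field-theoretic step also fails as stated: covering $k$ by finite subfields $k_i$ with $|k_i|$ unbounded does not force $k \supseteq \overline{\F}_p$. For example $\bigcup_n \F_{p^{2^n}}$ is an infinite locally finite field with finite subfields of unbounded order that is far from algebraically closed. The paper obtains algebraic closure not from the automorphisms but from stability: by Ryten's bi-interpretability (Fact~\ref{ryten}) the underlying locally finite field is interpretable in the stable group, hence is itself an infinite locally finite stable field, and Duret's theorem (Fact~\ref{duret}) then says it is algebraically closed; since there are no twisted forms over $\overline{\F}_p$, the group is a Chevalley group, and elementary equivalence finishes the job. Your Steinberg-type analysis of the action of $\Sigma$ would in any case need a separate argument that the fixed subfields are cofinal in $\overline{\F}_p$, which is exactly the point that unboundedness alone does not give.
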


Theorem~\ref{stable-extension} also depends on the CFSG.

In Section~\ref{section:fRM}, we show that in case of  groups of finite Morley rank, Theorem~\ref{stable-extension} can be proven without the use of the CFSG:

\begin{theorem}\label{theorem:fRM}
An infinite simple group of finite Morley rank admitting a finitary group of automorphisms is a Chevalley group over an algebraically closed field of positive characteristic.
\end{theorem}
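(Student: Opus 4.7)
The strategy is to use the finitary action on $G$ to exhibit $G$ as a directed union of finite subgroups (hence locally finite) and then invoke the CFSG-free classification tools available for simple groups of finite Morley rank. This matches the philosophy of Theorem~\ref{stable-extension}, with the CFSG step replaced by structural FMR arguments.

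First I would extract local finiteness. By the definition of a finitary group of automorphisms $A$ (Section~\ref{sec:finitary}), the fixed-point subgroups $G^{B}$ for an appropriate family of finite-index subgroups $B\leq A$ are finite and jointly exhaust $G$. Hence $G$ is a directed union of finite subgroups, i.e., locally finite. Combined with the FMR hypothesis, $G$ is a simple, infinite, locally finite group of finite Morley rank, and in particular of finite centraliser dimension.

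Second, since a simple infinite locally finite group contains an involution by a classical result, $G$ is not of degenerate type in the Altinel--Borovik--Cherlin dichotomy; it must be of even, odd, or mixed type. I would then invoke the corresponding CFSG-free recognition theorems: Altinel--Borovik--Cherlin for the even case (and to exclude mixed type), together with the later work on odd type, to conclude that $G$ is a Chevalley group over an algebraically closed field $K$. The local finiteness of $G$ then forces $K$ to be a locally finite algebraically closed field, hence $K=\overline{\F_p}$ for some prime $p$, giving the desired positive characteristic.

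The main obstacle is making the classification step go through CFSG-free, particularly in odd type, which is exactly where Theorem~\ref{stable-extension} invoked CFSG. The finitary hypothesis should play an active role here beyond merely providing local finiteness: the fixed-point subgroups $G^{B}$ have a uniform prime characteristic $p$ (matching that of finite Chevalley groups realised as Steinberg fixed points), which pins down the characteristic of $G$ and should enable a direct Lang--Steinberg-style recognition of the Chevalley structure, bypassing the heaviest pieces of the odd-type classification.
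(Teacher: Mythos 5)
Your first step contains the essential gap: you assert that the finite fixed-point subgroups $\mathrm{Fix}_G(B)$ ``jointly exhaust $G$,'' so that $G$ itself is locally finite. Nothing in Definition~\ref{def:finitary} gives this. Condition (1) says each non-trivial $\alpha\in A$ has finite fixed-point set, and condition (2) only guarantees that every non-empty $A$-invariant definable set contains \emph{some} element with finite $A$-orbit; it does not say that \emph{every} element of $G$ has a finite orbit (equivalently, is fixed by a finite-index, hence non-trivial, subgroup of $A$). The set of elements with finite orbit is exactly the locally finite core $G_*$, and a priori $G_*\subsetneq G$. The paper's strategy is built precisely to bridge this gap: Theorem~\ref{elementary-substructure} shows, via the Tarski--Vaught test and condition (2), that $G_*$ is a locally finite \emph{elementary substructure} of $G$. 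One then works with $G_*$ (which is a simple group of finite Morley rank by Fact~\ref{defmorleyrank}, elementary equivalence, and Fact~\ref{defsimple}), applies the CFSG-free classification of locally finite simple groups of finite Morley rank (Fact~\ref{fact:locally-finite}) to identify $G_*$ as a Chevalley group over an algebraically closed field of positive characteristic, and finally transfers this identification back up to $G$ using Thomas's finite axiomatisability of Chevalley groups of fixed type (Fact~\ref{Fact:thomas-Chevalley}). Your proposal omits both the descent to $G_*$ and the ascent back to $G$.

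Your second step is also off target. Once local finiteness is in hand (for $G_*$, not for $G$), there is no need to run the even/odd/mixed type programme: the locally finite case is exactly the one already settled CFSG-free (Fact~\ref{fact:locally-finite}), so invoking the type dichotomy amounts to re-proving that fact by a much harder route --- and, for a general non-locally-finite $G$ of odd type, one that is not actually available in the literature, as you yourself note. Finally, the conclusion of the theorem is an algebraically closed field of positive characteristic, not necessarily $\overline{\F_p}$; your closing claim that $K=\overline{\F_p}$ again rests on the unjustified local finiteness of $G$.
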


This paper is organised in the following way. In Section~\ref{sec:premlim}, we give preliminaries and facts needed in the rest of the paper. Then, in Section~\ref{sec:locally-finite}, we prove Theorem~\ref{th:definably-simple}. In Section~\ref{sec:finitary}, we introduce a finitary group of automorphisms of an algebraic structure $\mathcal{M}$ and the corresponding locally finite elementary substructure $\mathcal{M}_*$ of $\mathcal{M}$. Then, in Section~\ref{sec:stable}, we prove Theorem~\ref{stable-extension}. Finally, in Section~\ref{section:fRM}, we first introduce groups of finite Morley rank together with some background results, and then prove Theorem~\ref{theorem:fRM} without the use of the CFSG. In particular, this confirms the Cherlin-Zilber Conjecture in the specific case of a simple group of finite Morley rank admitting a finitary group of automorphisms.

\section{Preliminaries and background results}\label{sec:premlim}

In this section we give definitions and facts needed in the rest of the paper.

A group $G$ is called \emph{locally finite} if every finitely generated subgroup of $G$ is finite. For details on locally finite groups the reader may consult \cite{KegelOtto,Hartley-Borovik}. A group $G$ is called \emph{definably simple} if it has no proper non-trivial definable normal subgroups. If lengths of chains of centralisers in a group $G$ are bounded, the length of the longest such chain is called the \emph{centraliser dimension} of $G$.

\emph{Chevalley groups} are defined  by generators and relations and provide a description of important classes of simple algebraic groups (including those over algebraically closed fields) as \emph{abstract} groups, ignoring the underlying algebraic variety. In this paper we won't present the definition of Chevalley groups or twisted analogues of Chevalley groups but a reader unfamiliar with the construction may find all necessary details in \cite{Carter1971}.

The classification of groups of points of simple algebraic groups over algebraically closed fields amounts to the statement that they are isomorphic to Chevalley groups over corresponding fields; the reverse statement is also true.

In this paper, following the finite group terminology, we shall apply an umbrella term \emph{groups of Lie type} to all Chevalley groups, twisted analogues of Chevalley groups, and  non-algebraic twisted groups of Lie type $^2B_2$, $^2F_4$, or $^2G_2$ over finite, locally finite, and pseudofinite fields \cite{Carter1971,Steinberg}.

The following classical result is due to Simon Thomas.

\begin{fact}[Thomas \cite{thomas21983}]\label{thomas}\label{Fact:Thomas-locally-finite} An infinite simple locally finite group which satisfies the minimal condition on centralisers is a simple group of Lie type over a locally
finite field.

\end{fact}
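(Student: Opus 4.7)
The plan is to combine Kegel's structure theorem for infinite simple locally finite groups with the Classification of Finite Simple Groups (CFSG), using the min-$c$ hypothesis as a rank bound on the finite simple sections that Kegel's theorem produces.

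First, I would invoke Kegel's theorem to obtain a \emph{Kegel cover} $(H_i, M_i)_{i \in I}$ of $G$: a directed family of finite subgroups $H_i \le G$ with maximal normal subgroups $M_i \triangleleft H_i$, each quotient $H_i/M_i$ finite simple non-abelian, and such that every finite subset of $G$ is contained in some $H_i$ having trivial intersection with $M_i$. Next, I would convert the minimal condition on centralisers into a uniform bound on the Lie rank (equivalently, the Pr\"ufer rank) of the simple sections $H_i/M_i$. Chains of centralisers in $H_i$ lift to chains in $G$ of the same length, and a CFSG-based analysis of centralisers in finite simple groups shows that an unbounded sequence of alternating groups, sporadic groups, or groups of Lie type of unbounded rank would manufacture arbitrarily long centraliser chains inside some $H_i$, contradicting min-$c$ in $G$. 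Hence, for all large $i$, the $H_i/M_i$ are finite simple groups of Lie type of rank at most some fixed $r$, and only finitely many Lie types $X$ occur.

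Passing to a cofinal subfamily, I may assume all $H_i/M_i$ share a single Lie type $X$, defined over finite fields $k_i$. Standard results on subgroup structure of finite Chevalley groups (Steinberg, Seitz, Liebeck--Seitz) then force the inclusions $H_i/M_i \hookrightarrow H_j/M_j$ induced by $H_i \le H_j$ to be conjugate, in $X(\overline{k})$, to those given by field extensions $k_i \subseteq k_j$. Setting $k = \varinjlim k_i$, a locally finite field, I would globalise the quotient maps $H_i \twoheadrightarrow X(k_i) \hookrightarrow X(k)$ into a homomorphism $\varphi : G \to X(k)$. Simplicity of $G$ forces $\ker \varphi$ to be trivial, while the Kegel coverage property forces the image to contain every finite subgroup $X(k_i)$, hence to be all of $X(k)$, so $G \cong X(k)$.

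The main obstacle is the passage from local Kegel data to global Chevalley structure: one must coherently choose isomorphisms $H_i/M_i \cong X(k_i)$, control the ``noise'' subgroups $M_i$ so that they do not obstruct the limit, and verify that the inverse system of simple sections truly assembles into a direct limit of Chevalley groups rather than, say, a twisted variant or a proper subgroup. This is where Thomas's argument in \cite{thomas21983} relies most heavily on the CFSG and on detailed facts about the subgroup structure of finite groups of Lie type.
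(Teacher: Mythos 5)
The paper does not prove this statement: it is quoted as Fact~\ref{thomas} directly from Thomas's 1983 paper, so there is no internal proof to compare yours against. Your outline does follow the strategy that Thomas (and, for the finite centraliser dimension analogue, Borovik--Karhum\"aki and Buturlakin) actually use: Kegel covers, a CFSG-based rank bound on the simple sections extracted from the centraliser condition, reduction to a single Lie type over a chain of finite fields, and identification of the direct limit as a group of Lie type over the union of those fields. The shape of the argument is right.

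Two steps, however, are genuine gaps rather than omitted routine detail. First, your rank-bounding step transfers centraliser chains from $H_i$ to $G$, which is fine for subgroups (since $C_G(S)\cap H_i=C_{H_i}(S)$ preserves strictness of a chain), but the simple sections $H_i/M_i$ are \emph{quotients} of subgroups, and a strictly descending chain of centralisers in $H_i/M_i$ does not automatically produce one in $H_i$: the centraliser of a lifted set may map onto a proper subgroup of the centraliser of its image, so the lifted chain can collapse. Ruling out unboundedly large alternating sections or Lie-type sections of unbounded rank therefore needs an additional argument -- this is precisely where the published proofs do real work, e.g.\ by arranging the Kegel cover so that earlier $H_i$ embed into later simple sections. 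Second, the passage from local isomorphisms $H_i/M_i\cong X(k_i)$ to a single homomorphism $\varphi\colon G\to X(k)$ is asserted rather than constructed: the kernels $M_i$ need not be nested, so the quotient maps do not commute with the inclusions $H_i\le H_j$, and ``globalising'' them is the heart of Thomas's argument, not a consequence of simplicity of $G$. You flag this honestly in your final paragraph, but as written that portion is a plan for the step rather than the step itself. A smaller point: the conclusion of the Fact (and the paper's umbrella convention for ``groups of Lie type'') explicitly allows twisted groups -- unitary, Suzuki, Ree, etc.\ -- over locally finite fields, whereas your sketch treats a twisted variant as a pathology to be excluded; it is in fact a legitimate outcome of the limit construction.
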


The other important result by Thomas states that being a Chevalley group of fixed type over some field $K$ is a first-order axiomatisable property. This, together with results by Point and by Ryten, allows us to prove Theorem~\ref{extension2} in Section~\ref{sec:stable}.

\begin{fact}[Thomas \cite{thomas1983}, Theorem 18]\label{Fact:thomas-Chevalley} For each Chevalley group of fixed type $X$, the class $\{X(K)\mid K$ is a field $ \}$ is finitely axiomatisable.
\end{fact}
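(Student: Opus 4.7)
The plan is to exhibit, for the fixed Lie type $X$, a finite set of first-order sentences in the pure language of groups that precisely captures the class $\{X(K)\mid K \text{ a field}\}$, by combining the Chevalley--Steinberg presentation of $X(K)$ with the well-known uniform definable interpretation of $K$ inside $X(K)$.

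I would first recall that, fixing the root system $\Phi$ and Weyl group $W$ of $X$, the Chevalley group $X(K)$ is generated by root elements $x_\alpha(t)$ for $\alpha\in\Phi$ and $t\in K$, subject to additivity $x_\alpha(t)x_\alpha(u)=x_\alpha(t+u)$, the Chevalley commutator formulas $[x_\alpha(t),x_\beta(u)]=\prod x_{i\alpha+j\beta}(c_{i,j}t^iu^j)$, and the standard torus--Weyl relations. The shape of this presentation---which roots occur, which integer structure constants $c_{i,j}$ appear, and the bounded length of the Bruhat decomposition---depends only on $X$; only the parameters $t\in K$ range freely. I would then recall the classical uniform interpretation of $K$ in $X(K)$: taking a root subgroup $U_\alpha$ as a copy of the additive group and recovering multiplication from the conjugation action of a one-parameter subgroup of a Cartan torus, one produces a formula $\varphi_X$, in the language of groups and depending only on $X$, that defines a field isomorphic to $K$.

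The axioms would then be: (a) sentences asserting the existence of definable subgroups playing the roles of the root subgroups $U_\alpha$, a maximal torus $H$, and Weyl representatives $w_\alpha$; (b) a sentence asserting that $\varphi_X$ defines a field on its domain; (c) the finitely many Chevalley commutator formulas, written using the interpreted field operations via $\varphi_X$; (d) the Bruhat decomposition, expressed as ``every element of $G$ is a product of at most $2|W|+1$ factors from $H\cup\bigcup_{\alpha\in\Phi}U_\alpha\cup\{w_\alpha:\alpha\in\Phi\}$'', which is genuinely first-order since $|W|$ depends only on $X$. Any $X(K)$ satisfies these axioms by construction; conversely, a model $G$ yields, via $\varphi_X$, an interpreted field $F$ together with definable root subgroups satisfying the full Steinberg presentation over $F$, so $G\cong X(F)$ by Steinberg's uniqueness theorem.

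The principal obstacle is axiom (d): a priori, ``$G$ is generated by the $U_\alpha$'s'' is an infinite disjunction over word lengths, not a single sentence, and it is the Bruhat normal form, with length bounded purely in terms of $W$, that rescues first-order expressibility---this is the feature special to Chevalley groups that the proof really exploits. A secondary technicality is verifying that $\varphi_X$ interprets the field uniformly across all $K$; this reduces to checking that the standard relations between $U_\alpha$, $H$, and $U_{-\alpha}$ encode field multiplication through group-theoretic formulas that are independent of $K$, which is classical but notationally heavy.
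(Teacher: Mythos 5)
The paper does not prove this statement; it imports it verbatim as a Fact, citing Theorem~18 of Thomas's 1983 paper, so there is no in-paper argument to compare yours against. That said, your sketch is essentially the standard proof from the cited source: uniform bi-interpretation of the field $K$ in $X(K)$, the Chevalley commutator and torus--Weyl relations as a finite scheme of axioms over the interpreted field, and the Bruhat decomposition to convert ``generated by root subgroups'' into a bounded-length (hence first-order) statement, with Steinberg's presentation theorem supplying the converse identification $G\cong X(F)$. You correctly identify the Bruhat length bound as the point where finite axiomatisability is actually earned.

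Two points you should not leave implicit. First, Steinberg's presentation theorem has exceptions over very small fields ($|K|\leq 4$ for some types), and the class in the statement includes those groups; this is harmless because each exceptional $X(K)$ is a single finite group and hence finitely axiomatisable up to isomorphism, so one adds a finite disjunction, but the argument as written glosses over it. Second, ``Chevalley group of fixed type'' fixes the isogeny type as well as the root system, so your axioms must also pin down the kernel of the map from the abstractly presented (universal) group onto $G$ --- e.g.\ triviality of the centre in the adjoint case --- otherwise a model of (a)--(d) could be any central quotient. Both repairs are routine and first-order, so the proposal is sound once they are made explicit.
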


A theory $T$ is called \emph{pseudofinite} if
it has infinite models but every sentence in it has a finite model. If $G$ is a simple pseudofinite group, then $G$ is elementary equivalent to an ultraproduct  $\prod G(q)/\mathcal{U}$ of finite groups $G(q)$ of a fixed Lie type over finite fields $\F_q$ \cite{Wilson1995}. For details on ultraproducts reader may consult \cite{BellSlomson}.

\begin{fact}[Point \cite{Point}]\label{point} Let $\{X(F_i) \mid i \in I\}$ be a family of Chevalley or twisted Chevalley groups of fixed type $X$ over finite or pseudofinite fields $F_i$, and let
$\mathcal{U}$ be a non-principal ultrafilter on the set $I$. Then,
\[\prod_{i \in I}X(F_i)/\mathcal{U} \cong X\left(\prod_{i\in I} F_i/\mathcal{U}\right).
\]
\end{fact}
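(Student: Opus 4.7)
The plan is to exploit the fact that a Chevalley (or twisted Chevalley) group $X(K)$ is built from the field $K$ by a uniformly first-order procedure, so that ultraproducts of groups correspond to ultraproducts of fields. Let $F := \prod_{i\in I} F_i / \mathcal{U}$, which is a field by \L o\'s's theorem (finite or pseudofinite, depending on $\mathcal{U}$ and the $F_i$). The goal is to exhibit an explicit isomorphism $X(F) \to \prod_{i\in I} X(F_i)/\mathcal{U}$.

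The natural first step is to use the Steinberg presentation. For each root $\alpha$ of the root system attached to $X$ and each $t \in K$, one has a generator $x_\alpha(t)$, and these generators satisfy the Chevalley commutator relations, which are polynomial identities in the parameters $t \in K$ with structure constants in $\Z$. Given $t \in F$ represented by a sequence $(t_i) \in \prod F_i$, define $\varphi \colon X(F) \to \prod X(F_i)/\mathcal{U}$ on generators by $\varphi(x_\alpha(t)) = [(x_\alpha(t_i))]$ and extend multiplicatively. Because the defining relations are first-order sentences in the field parameters, \L o\'s's theorem guarantees that every Steinberg relation holding in each $X(F_i)$ is inherited by the ultraproduct; hence $\varphi$ respects the presentation and is a well-defined group homomorphism.

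The second step is to establish that $\varphi$ is bijective. For this I would use the Bruhat decomposition of $X(K)$: every element factors uniquely as $u h \dot w v$ with $u$ in a unipotent subgroup $U^-$, $h$ in the standard torus $H$, $\dot w$ a Weyl-group representative, and $v$ in the unipotent subgroup $U_w$ determined by $w$. Each of the unipotent factors and the torus is parametrised by tuples from $K$ in a uniform first-order way (through root-subgroup coordinates), and the Weyl group is finite and fixed. Consequently, an element of $\prod X(F_i)/\mathcal{U}$ is given (modulo $\mathcal{U}$) by a choice of $w$ that is $\mathcal{U}$-almost constant together with parameter tuples from $\prod F_i / \mathcal{U} = F$; matching these to the Bruhat coordinates of elements of $X(F)$ provides a two-sided inverse of $\varphi$. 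Alternatively, one can avoid naming the inverse explicitly and instead invoke Fact~\ref{Fact:thomas-Chevalley}: by \L o\'s's theorem the ultraproduct $\prod X(F_i)/\mathcal{U}$ satisfies the finite axiomatisation, so it is $X(K)$ for some field $K$, and one then identifies $K$ with $F$ by recovering the field from the coordinates of root subgroups.

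For the twisted groups the same scheme works, with the extra bookkeeping that the twisting data (a diagram automorphism together with a field automorphism) must be handled uniformly. For Steinberg twists $^2A_n, \,^2D_n, \,^2E_6, \,^3D_4$ the twisting field automorphism is the Frobenius over a fixed subfield and admits a uniform first-order description in the ambient field; for the Suzuki and Ree families $^2B_2, \,^2F_4, \,^2G_2$ one uses that the special field endomorphism with $\sigma^2 = \mathrm{Frob}$ (in characteristic $2$ or $3$) is first-order definable uniformly across the fields in question, so the fixed-point description of the twisted group again passes through ultraproducts. I expect the main obstacle to be precisely this last point: arranging the definitions of the Suzuki/Ree groups so that the parametrisation by the underlying field (and the relations it imposes) is visibly uniform across the family, so that a single application of \L o\'s's theorem closes the argument.
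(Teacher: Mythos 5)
The paper does not actually prove this statement: it is imported as a black-box Fact with a citation to Point's article, so there is no internal proof to measure yours against. Judged on its own merits, your outline follows what is essentially the published argument: the group of fixed type $X$ is uniformly coordinatised by the field through root subgroups, the Steinberg relations are polynomial identities in the parameters so that \L{}o\'s's theorem makes the natural map on generators a well-defined homomorphism, and the Bruhat decomposition (with the finite Weyl group forcing the Weyl component of a sequence to be $\mathcal{U}$-almost constant) yields bijectivity. One small point to add: the Steinberg presentation presents the \emph{universal} Chevalley group, whereas $X(K)$ here is the simple quotient, so you must also factor through the centre; this is harmless since the centre is uniformly definable, but it should be said.

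The genuine weak spot is your treatment of the Suzuki and Ree families. The endomorphism $\sigma$ with $\sigma^2 = \mathrm{Frob}$ on $\F_{2^{2n+1}}$ (respectively $\F_{3^{2n+1}}$) is the power map $x \mapsto x^{2^{n+1}}$, whose exponent varies with the field, so it is \emph{not} uniformly first-order definable in the pure field language, and no single application of \L{}o\'s's theorem in that language will produce it in the ultraproduct. This is exactly why the present paper supplements Fact~\ref{point} with Ryten's result (Fact~\ref{ryten}), which works in the expanded structures $(\F_q,\sigma)$: for those families the ultraproduct isomorphism must carry the difference-field structure along, i.e.\ one takes the ultraproduct of the pairs $(F_i,\sigma_i)$ rather than of the fields alone. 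For the algebraic twists $^2A_n$, $^2D_n$, $^3D_4$, $^2E_6$ your remark about the order-$2$ or order-$3$ automorphism over the fixed subfield is fine. With that repair, your argument is sound and is the standard route to the result.
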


\begin{fact}[Ryten \cite{Ryten2007}, Chapter 5]\label{ryten}
Let $G(q)$ be finite groups of fixed Lie type over finite fields $\F(q)$. Then groups $G(q)$ are uniformly bi-interpretable over parameters with $\F_q$, or, in the case of groups $^2B_2$, $^2F_4$, or $^2G_2$, with structures $(\F_q ,\sigma)$, for an appropriate field automorphism $\sigma$.

\end{fact}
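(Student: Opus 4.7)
The plan is to establish a uniform bi-interpretability between $G(q)$ and $\F_q$ (or $(\F_q,\sigma)$ in the twisted cases) in two steps: first interpret the field in the group via its internal BN-pair structure, then interpret the group back in the field via a Chevalley/Steinberg presentation, and finally check that both directions are given by first-order formulas which depend only on the Lie type, not on $q$.

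For untwisted Chevalley groups I would exploit the root subgroup structure. Fix a pair of opposite Borel subgroups; their intersection is a maximal split torus $T$, and the unipotent radical of one Borel decomposes as a product of one-dimensional root subgroups $U_\alpha$, each isomorphic as a group to $(\F_q,+)$. Multiplication on $\F_q$ is then recovered from the conjugation action of $T$: for a root $\alpha$ and a torus element $h_\beta(t)$, conjugation rescales $U_\alpha$ by $t$ raised to an integer power determined by $\alpha$ and $\beta$, so the field structure is definable from the combined data of $(U_\alpha,T,\cdot)$. Fixing as parameters a Chevalley basis for the torus together with a generator of each simple root subgroup, one writes down formulas that produce a definable copy of $\F_q$ inside $G(q)$, the formulas being determined by the Lie type alone.

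The reverse direction uses the Steinberg presentation: the generators $x_\alpha(t)$, with $t\in \F_q$, together with the Chevalley commutator relations, realise $G(q)$ as a quotient of a structure built uniformly from $\F_q$, hence as a quotient definable in $\F_q$ by formulas depending only on the type. Composing the two interpretations and verifying that the resulting self-interpretations of $G(q)$ and $\F_q$ are definably isomorphic to the identity, uniformly in $q$, yields the required uniform bi-interpretability over parameters.

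For the twisted Suzuki and Ree groups $^2B_2(q),\, {}^2F_4(q),\, {}^2G_2(q)$ the argument above breaks down because the natural parametrisation of the root subgroups is no longer by $\F_q$ alone but by a two-dimensional structure encoding an extra Frobenius-type twist. The natural interpretation therefore produces the structure $(\F_q,\sigma)$, where $\sigma$ is a distinguished automorphism whose square is the prime Frobenius of $\F_q$, and the commutator relations internal to the root subgroups involve $\sigma$ explicitly. I expect the main obstacle to be precisely these twisted cases: one must verify both that the formulas recovering $\sigma$ from the group can be chosen uniformly in $q$, and that the reverse interpretation of $^2X(q)$ from $(\F_q,\sigma)$ is likewise uniform. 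The difficulty here is bookkeeping with the twisted Chevalley presentation rather than a genuinely new conceptual ingredient.
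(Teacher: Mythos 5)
The paper does not prove this statement at all: it is quoted verbatim as a Fact from Ryten's thesis (Chapter 5), so there is no internal proof to compare your sketch against. Judged on its own terms, your outline follows the strategy that is actually used in the literature (interpret the field in the group via root subgroups and the torus, interpret the group back in the field, check the compositions), but several of the steps you treat as routine are precisely where the real work lies, and two of them are genuine gaps as written.

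First, the passage from the Steinberg presentation to an interpretation of $G(q)$ in $\F_q$ does not work as stated. A presentation by generators and relations realises $G(q)$ as an abstract quotient of a free product, and elements of that quotient are words of unbounded length; this is not a first-order interpretation. What one actually needs is that every element of $G(q)$ is a product of a number of root and torus elements bounded uniformly in $q$ (e.g.\ via the Bruhat decomposition $G=BWB$ with $B=UT$), or alternatively one interprets $G(q)$ as a matrix group cut out by equations over the prime field and quotients by the finite definable centre. Second, your recovery of field multiplication from the torus action only yields the maps $t\mapsto t^{\langle\alpha,\beta^\vee\rangle}$ on a root subgroup; when all these pairings are even (already in rank one, where $h_\alpha(t)$ acts on $U_\alpha$ by $t^2$) this does not generate the full multiplicative action for $q$ odd, and the rank-one groups $\mathrm{PSL}_2(q)$, $\mathrm{PSU}_3(q)$, $\mathrm{Sz}(q)$, $\mathrm{Ree}(q)$ require a separate argument. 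Finally, you substantially understate the twisted cases: showing that $(\F_q,\sigma)$ with $\sigma^2$ the Frobenius is uniformly bi-interpretable with $^2B_2(q)$, $^2F_4(q)$, $^2G_2(q)$ is not bookkeeping but the conceptual core of Ryten's thesis, resting on the model theory of Frobenius difference fields (Hrushovski, Chatzidakis--Hrushovski); the verification that the composite self-interpretations are uniformly definably isomorphic to the identity is likewise asserted rather than argued. For the purposes of this paper the correct move is the one the author makes: cite the result.
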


Next, we briefly introduce stable structures. Detailed discussion of stable groups can be found in \cite{Wagner1997, Poizat2001a}. More generally, for introduction on stability theory, we refer the reader to \cite{Pillay2013}.

\begin{definition} A $\mathcal{L}$-structure $\mathcal{M}$ is called stable if there is no $\mathcal{L}$-formula $\phi(\bar{x},\bar{y}) \in Th(\mathcal{M})$ and tuples $\bar{a}_i$, $\bar{b}_j$ with $i,j < \omega$ such that $\mathcal{M}\models \phi(\bar{a}_i,\bar{b}_j)$ if and only if $i\leq j$.

\end{definition}

The fact below is well-known,  see for example \cite[Lemma~8.4.2]{Ziegler-Tent}.

\begin{fact}
A structure interpretable in a stable structure is stable.
\end{fact}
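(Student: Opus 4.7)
The plan is to prove the contrapositive by pulling back the order property through the interpretation. Unwinding the definition of interpretability of $\mathcal{N}$ in $\mathcal{M}$, there exist an $\mathcal{L}_\mathcal{M}$-definable set $D \subseteq M^n$, a definable equivalence relation $E$ on $D$, and a surjection $\pi \colon D \to N$ whose fibres are exactly the $E$-classes, with the property that the $\pi$-preimage of every $\mathcal{L}_\mathcal{N}$-definable relation on $N$ is an $E$-invariant $\mathcal{L}_\mathcal{M}$-definable relation on a suitable power of $D$.

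Assume, toward a contradiction, that $\mathcal{N}$ is unstable. Then there exist an $\mathcal{L}_\mathcal{N}$-formula $\psi(\bar{x},\bar{y})$ and tuples $\bar{a}_i,\bar{b}_j$ in $\mathcal{N}$ ($i,j<\omega$) such that $\mathcal{N} \models \psi(\bar{a}_i,\bar{b}_j)$ if and only if $i \leq j$. By induction on the complexity of $\psi$ one produces an $\mathcal{L}_\mathcal{M}$-formula $\phi(\bar{x}',\bar{y}')$ whose free variables are grouped into $n$-tuples, one block per variable of $\psi$, with the property that for every choice of representatives $\bar{a}'_i \in \pi^{-1}(\bar{a}_i)$ and $\bar{b}'_j \in \pi^{-1}(\bar{b}_j)$ one has $\mathcal{M} \models \phi(\bar{a}'_i,\bar{b}'_j)$ iff $\mathcal{N} \models \psi(\bar{a}_i,\bar{b}_j)$. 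Fixing such representatives then exhibits the order property for $\phi$ in $\mathcal{M}$, contradicting the stability of $\mathcal{M}$.

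The only content lies in the inductive translation step: atomic formulas are handled by the hypothesis on the interpretation, Boolean connectives lift verbatim, and an existential quantifier $\exists x$ ranging over $N$ becomes $\exists \bar{x}' \in D$ over $M^n$. What must be tracked is the bookkeeping of arities (each $\mathcal{N}$-variable of $\psi$ corresponds to an $n$-block of $\mathcal{M}$-variables in $\phi$) and the $E$-invariance of the translated formula in each block, both of which are routine. I do not anticipate any genuine obstacle beyond this translation; the result is really just the observation that unstable formulas push forward and pull back along interpretations.
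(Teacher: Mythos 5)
Your argument is correct and is the standard one: the paper does not prove this statement but cites it as a known fact (Tent--Ziegler, Lemma 8.4.2), whose proof is exactly the translation-of-formulas argument you outline, pulling back an instance of the order property along the interpretation map. The only cosmetic point is that the interpretation hypothesis need only be assumed for atomic relations, since your inductive translation then yields it for all formulas; as you note, the induction is routine and the argument goes through.
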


The following results by Baldwin and Saxl and by Duret play a key role in Section~\ref{sec:stable}, where we prove Theorem~\ref{stable-extension}.

\begin{fact}[Baldwin and Saxl \cite{baldwinsaxl}]\label{Fact:BaldwinSaxl} Stable groups have finite centraliser dimension.
\end{fact}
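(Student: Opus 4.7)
The plan is to prove the contrapositive: if $G$ has infinite centraliser dimension, then $G$ is unstable. The core observation is that a strictly descending chain of centralisers, together with elements witnessing the strict inclusions, directly encodes the order property for a formula built from the commutator.

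The first task is a uniform-boundedness lemma for the commutator formula $\phi(x,y) \equiv ([x,y]=1)$ defining $C_G(b) = \phi(G,b)$: one seeks an integer $n$, depending only on $\phi$, such that every intersection $\bigcap_{b \in B} C_G(b)$ equals a subintersection over some $B_0 \subseteq B$ with $|B_0| \le n$. Argue by contradiction: if no such $n$ works, then for each $k$ one can choose parameters $b_1, \ldots, b_k$ and elements $c_1, \ldots, c_k$ with $c_j \in C_G(b_i)$ if and only if $i \ne j$. Then set $d_j := c_1 c_2 \cdots c_{j-1}$ and use that centralisers are \emph{subgroups} (so products of elements commuting with $b_l$ still commute with $b_l$) to verify that $d_j \in C_G(b_l)$ if and only if $l \ge j$. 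The pairs $(d_j, b_l)$ then witness the order property for $\phi$, contradicting stability of $G$.

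Granting this uniform bound, every centraliser of a subset of $G$ is the centraliser of a subset of size at most $n$, and one may restrict attention to centralisers of $n$-tuples. Assume now for contradiction that the centraliser dimension is infinite. Then for each $k$ there exist $n$-tuples $\bar x_1, \ldots, \bar x_k$ with strictly descending centralisers together with separating elements $y_i \in C_G(\bar x_i) \setminus C_G(\bar x_{i+1})$; this is first-order expressible, so compactness in a sufficiently saturated elementary extension yields infinite sequences $(\bar x_i)_{i<\omega}$ and $(y_i)_{i<\omega}$ with the same properties. The descending chain forces $y_i \in C_G(\bar x_j)$ for $j \le i$ and $y_i \notin C_G(\bar x_j)$ for $j > i$, so the formula $\phi_n(x, \bar y) \equiv \bigwedge_{m=1}^{n} ([x,y_m]=1)$ has the order property, contradicting stability. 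The only delicate step is the uniform-bound lemma; the subsequent extraction of the order property is then a routine compactness argument.
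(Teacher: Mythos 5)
The paper does not prove this statement at all---it is quoted as a Fact, with the proof deferred to the cited Baldwin--Saxl paper---so there is no internal argument to compare against. Your proposal is correct and is essentially the classical Baldwin--Saxl proof: the uniform-bound lemma via the telescoping products $d_j = c_1\cdots c_{j-1}$ (where the only step you elide is extracting an irredundant family $c_j \in C_G(b_i) \Leftrightarrow i \neq j$ by passing to a minimal subfamily realising the intersection), followed by the routine transfer of unbounded chains of centralisers of $n$-tuples into the order property for $\bigwedge_{m=1}^{n}[x,y_m]=1$.
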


\begin{fact}[Duret, \cite{Duret1980}, Corollaire 6.6]\label{duret} Infinite locally finite stable fields are algebraically closed.
\end{fact}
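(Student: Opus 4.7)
My plan is to argue by contradiction: suppose $K$ is infinite, locally finite and stable of characteristic $p>0$, but not algebraically closed, and aim to produce a definable half-graph in $K$. As structural setup, since $K$ is locally finite, $K$ embeds in $\overline{\F_p}$ and is classified by its Steinitz number $\mathfrak{s}=\prod_\ell \ell^{n_\ell}$ with $n_\ell\in\N\cup\{\infty\}$; $K$ is algebraically closed precisely when every $n_\ell=\infty$, so by hypothesis there is a prime $\ell$ with $n_\ell<\infty$. The tower of finite subfields $K_m=\{x\in K:x^{p^m}=x\}=K\cap\F_{p^m}$ is uniformly definable, so the arithmetic of $K$ inside each finite subfield --- norms, $\ell$-th power residue tests, roots of unity --- is expressible by parameter-free first-order formulas.

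The core step is to convert the assumption $n_\ell<\infty$ into an order-property witness. Since $\F_{p^{\ell^{n_\ell+1}}}\not\subseteq K$, the $\ell$-th power map on the multiplicative groups $K_m^*$ along the tower fails to be uniformly surjective in a pattern that depends on the $\ell$-adic valuation of $m$. I would try to extract from this an instance of the order property by exhibiting sequences $(a_i),(b_j)$ drawn from a carefully chosen cofinal sub-tower together with a formula $\phi(x,y)$ --- of Legendre or $\ell$-th-power-residue type, or, in the edge case $\ell=p$, of Artin--Schreier type --- such that $\phi(a_i,b_j)$ holds iff $i\le j$. A conceptually parallel route is to use Baldwin--Saxl applied to $K^*$: a genuinely strictly descending chain of definable subgroups built from iterated $\ell$-th powers across the tower would already contradict the finite centraliser dimension forced on $K$ by stability.

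The principal obstacle is this explicit construction: it reduces to a finite-field combinatorial problem showing that $\ell$-th-power residuosity across nested subfields $\F_q\subseteq\F_{q^{m_1}}\subseteq\F_{q^{m_2}}\subseteq\cdots$ inside $K$ carries genuinely new information at infinitely many stages, rather than being washed out by the passage to larger subfields. A secondary difficulty is the case $\ell=p$, where the power-residue strategy degenerates and one must argue via Artin--Schreier extensions instead; in modern language this case is handled at once by the Kaplan--Scanlon--Wagner theorem on Artin--Schreier closure of stable fields, but in Duret's 1980 setting a self-contained treatment is needed.
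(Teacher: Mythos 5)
First, note that the paper itself offers no proof of this statement: it is quoted verbatim as an external result (Duret, Corollaire 6.6) and used as a black box, so the only fair comparison is with Duret's actual argument. Your sketch is in the right general spirit -- reduce non-algebraic-closedness to a failure of $\ell$-th power (or Artin--Schreier) surjectivity and turn that into an unstable formula -- but as written it has concrete gaps. The tower $K_m=\{x:x^{p^m}=x\}$ is definable for each fixed $m$ but \emph{not} uniformly definable: the formula $x^{p^m}=x$ grows with $m$, and the order property requires a single formula $\phi(x,y)$, so "the arithmetic inside each finite subfield is expressible by parameter-free formulas" does not by itself give you the uniformity you need. The Baldwin--Saxl fallback is a dead end twice over: $K^*$ is abelian, so every centraliser is all of $K^*$ and the centraliser dimension of $K$ carries no information; and the chain $(K^*)^{\ell}\supseteq (K^*)^{\ell^2}\supseteq\cdots$ is not an infinite strictly descending chain anyway, because when $n_\ell<\infty$ the $\ell$-torsion of $K^*$ is a \emph{finite} cyclic group, so the chain stabilises at $\bigoplus_{q\neq\ell}T_q(K^*)$ after finitely many steps.

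More importantly, the step you flag as "the principal obstacle" is not a technicality to be filled in later -- it \emph{is} the theorem. Duret's proof runs through two inputs your sketch never names: Ax's theorem that every infinite algebraic extension of a finite field is pseudo-algebraically closed (a consequence of the Lang--Weil estimates), and a point-counting argument on fibre products of the covers $y^\ell=x+t$ (resp.\ $y^p-y=x+t$ when $\ell=p$) over a degree-$\ell$ Galois extension, which shows that \emph{every} finite pattern of $\ell$-th power residuosity is realised; this yields the independence property, hence instability. Without the PAC/Weil input there is no mechanism forcing your sequences $(a_i),(b_j)$ to exist -- residuosity in nested finite fields can perfectly well be "washed out," exactly the scenario you worry about -- so the proposal as it stands does not close. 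A correct write-up should either import Duret's Théorème (PAC and not separably closed implies the independence property) together with Ax's theorem, or reprove the Lang--Weil counting step directly; the Kaplan--Scanlon--Wagner theorem you mention handles only the $\ell=p$ case and is anachronistic for the remaining primes.
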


\section{Locally finite definably simple groups of finite centraliser dimension}\label{sec:locally-finite}

In this section we prove our result on infinite definably simple locally finite groups of finite centraliser dimension, Theorem~\ref{th:definably-simple}; it is a natural extension of Fact~\ref{thomas}.

\medskip
\noindent
\textbf{Theorem \ref{th:definably-simple}}\emph{ An infinite definably simple locally finite group of finite centraliser dimension is a simple  group of Lie type over a locally finite field.}
\medskip

Our proof uses the CFSG.

\subsection{Proof of Theorem~\ref{th:definably-simple}}

We start by citing a general structural result about locally finite groups of finite centraliser dimension \cite{borovik-karhumaki, Buturlakin2018}.

\begin{fact} [Borovik and Karhum\"{a}ki \!\!\cite{borovik-karhumaki}, Buturlakin \cite{Buturlakin2018}]\label{th:structure-lfg} Let $H$ be a locally finite group of finite centraliser dimension $c$. Then $H$ has a  normal series
\[
1 \unlhd S \unlhd L  \unlhd  H,
\]
where

\begin{itemize}

\item[{\rm (a)}] $S$ is solvable of solvability degree bounded by a function of $c$,

\item[{\rm (b)}] $\overline{L} =L/S$ is a direct product $\overline{L} = \overline{L}_1 \times \cdots \times \overline{L}_m$ of finitely many non-abelian simple groups,

\item[{\rm (c)}] each $\overline{L}_i$ is either finite, or a group of Lie type over a locally finite field,
\item[{\rm (d)}]  the factor group $H/L$ is finite.

\end{itemize}
\end{fact}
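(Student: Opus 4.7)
The plan is to build the normal series in three stages: extract a locally solvable radical $S$, identify the generalised socle $L/S$ as a finite direct product of simple groups, and finally bound $H/L$ by controlling the induced outer automorphisms.

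In the first stage, let $S$ be the maximal locally solvable normal subgroup of $H$. To bound its derived length by a function $f(c)$ of the centraliser dimension, I would invoke the known result (due to Khukhro and others) that a solvable group of centraliser dimension at most $c$ has derived length at most $f(c)$. Since $S$ is locally finite, every finitely generated subgroup of $S$ is a finite solvable group of centraliser dimension at most $c$, and hence has derived length at most $f(c)$; passing to the direct limit, $S$ itself is solvable of derived length at most $f(c)$.

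In the second stage, I work in $\overline{H} = H/S$, which has trivial locally solvable radical. Any minimal non-trivial normal subgroup here is non-abelian, and a componentwise analysis (combined with the locally finite socle theory) shows that it is a direct product of pairwise isomorphic non-abelian simple groups. Finite centraliser dimension then forces both the number of simple direct factors in each such product and the total number of such minimal normal subgroups to be finite: if there existed infinitely many pairwise-commuting non-abelian simple normal subgroups $\overline{L}_1, \overline{L}_2, \ldots$, picking $g_i \in \overline{L}_i \setminus \{1\}$ would yield a strictly descending chain $C_{\overline{H}}(g_1) \supsetneq C_{\overline{H}}(g_1,g_2) \supsetneq \cdots$, contradicting the bound $c$. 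The subgroup generated by all non-abelian simple normal subgroups is therefore a finite direct product $\overline{L} = \overline{L}_1 \times \cdots \times \overline{L}_m$; define $L$ to be its preimage in $H$. Each factor $\overline{L}_i$ is a simple locally finite group of finite centraliser dimension, hence satisfies the minimal condition on centralisers, and Thomas's Fact~\ref{thomas} identifies it as either finite or a simple group of Lie type over a locally finite field.

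The main obstacle is the third stage: showing that $H/L$ is finite. The quotient $H/L$ embeds into $\operatorname{Aut}(\overline{L})/\operatorname{Inn}(\overline{L})$, and the induced permutation action on the set of simple factors $\{\overline{L}_1,\ldots,\overline{L}_m\}$ gives a homomorphism into the finite symmetric group $S_m$; let $K$ be its kernel, which acts on each $\overline{L}_i$ by outer automorphisms. When $\overline{L}_i$ is finite, $\operatorname{Out}(\overline{L}_i)$ is automatically finite. When $\overline{L}_i$ is of Lie type over a locally finite field, $\operatorname{Out}(\overline{L}_i)$ is generated by diagonal, graph and field automorphisms, and the only source of infinitude comes from the field automorphisms. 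The plan to exclude these is to show that any lift to $H$ of a field automorphism of infinite order would produce an infinite strictly descending chain of centralisers in $H$, arising from the fixed-point subgroups of its iterates over a strictly descending chain of subfields, once again contradicting the bound $c$. This forces $K$ to act as a finite group on each $\overline{L}_i$, so $K$ and hence $H/L$ is finite, completing the required normal series.
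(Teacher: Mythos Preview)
The paper does not prove this statement at all: it is quoted as a black-box structural result from \cite{borovik-karhumaki} and \cite{Buturlakin2018} and then applied in the proof of Theorem~\ref{th:definably-simple}. So there is no ``paper's proof'' to compare against; what you have written is an independent sketch of how such a theorem might be established.

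As a sketch it is in the right spirit, but there is a genuine gap that would block it as written. All of your centraliser-chain arguments in stages two and three take place in the quotient $\overline{H}=H/S$: you bound the number of simple factors by building a strictly descending chain $C_{\overline{H}}(g_1)\supsetneq C_{\overline{H}}(g_1,g_2)\supsetneq\cdots$, and you rule out infinite groups of field automorphisms by a similar chain. But finite centraliser dimension is \emph{not} known to pass to arbitrary quotients, and you give no argument that $\overline{H}$ inherits the bound $c$. Lifting the chain to $H$ is not straightforward either: preimages $g_i$ of elements in distinct factors only commute modulo $S$, so $C_H(g_1)\supsetneq C_H(g_1,g_2)\supsetneq\cdots$ need not hold. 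The actual proofs in the cited papers work around this by staying inside $H$ as much as possible, using components (subnormal quasisimple subgroups) and the layer $E(H)$ rather than passing wholesale to $H/S$, and by carefully controlling how centralisers behave across the solvable radical.

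A second, smaller gap: your embedding of $H/L$ into $\mathrm{Out}(\overline{L})$ presupposes $C_{\overline{H}}(\overline{L})=1$, which in turn needs that every non-trivial normal subgroup of $\overline{H}$ contains a minimal one. In infinite groups this is not automatic and has to be extracted from the chain condition; you invoke ``locally finite socle theory'' but never actually secure the existence of minimal normal subgroups in $\overline{H}$.
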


Now we start the proof of Theorem~\ref{th:definably-simple}. We work in notation of Fact~\ref{th:structure-lfg}. Recall that, in a group with descending chain condition for centralisers, centralisers are definable. 

If $S \neq 1$ then $H$ contains a non-trivial abelian normal subgroup $A$. But then $A \leqslant Z(C_H(A))$ and $Z=Z(C_H(A))$ is a non-trivial definable abelian normal subgroup of $H$.  Since $H$ is definably simple, $Z=H$ and $H$ is abelian; being definably simple locally finite group, it is forced to be cyclic of prime order, which contradicts the assumption that $H$ is infinite. Hence $S=1$.

Now, by Fact \ref{th:structure-lfg}, $H$ contains a normal subgroup $L$ of finite index such that $L = L_1 \times \cdots \times L_m$ is a direct product of finitely many non-abelian simple groups $L_i$ each of which is a group of Lie type  over a locally finite field.

Next we prove that $L=H$; to reach that point, it suffices to prove that $L$ is definable in $H$.

Recall that a simple group $R$ is called \emph{uniformly simple} if for any two non-trivial elements $g$ and $r$, the length of expression of $g$, as a finite product of conjugates of $r$ or the inverse of $r$, is uniformly bounded. Now, by \cite{Ellers1999}, each $L_i$, being simple groups of Lie type, are uniformly simple in $L$. Thus, every element in each $L_i$ is a product of a bounded number of $L_i$-conjugates of some non-trivial element $ x_i \in L_i$. Further, $H$-conjugates of $x_i$ belong to the normal subgroup $L$ of $H$, that is, $$L_i=(x_i^{\pm 1})^{ L_i}\cdots (x_i^{\pm 1})^{ L_i} \subseteq (x_i^{\pm 1})^{H}\cdots (x_i^{\pm 1})^{H} \subseteq \langle x_i^H \rangle \leqslant L.$$ Take the maximal length of the product $(x_i^{\pm 1})^{ L_i}\cdots (x_i^{\pm 1})^{ L_i}$ (and thus the maximal length of the product $(x_i^{\pm 1})^{H}\cdots (x_i^{\pm 1})^{H}$ as well) for each $i$.  Then, $$L=L_1 \times \cdots \times L_m \subseteq \left((x_1^{\pm 1})^{H}\cdots (x_1^{\pm 1})^{H} \right) \cdots \left((x_m^{\pm 1})^{H}\cdots (x_m^{\pm 1})^{H} \right) \subseteq L.$$ This proves that $L$ is a definable subgroup of $H$, and thus, $L=H$ since $H$ is definably simple.

We should mention that one can also find a first-order formula without parameters defining $L$ in $H$: groups $L_i$, being groups of Lie type over a locally finite field, are unions of ascending chains of finite simple groups of the same Lie type over finite fields of the same characteristic, say $p$. Their orders are well-known (see, for example, the table \cite[pp.~490]{gorenstein1968}) and contain divisors of the form $p^k\pm 1$ for $k$ growing without bounds as subgroups in the chains grow bigger. Now the classical theorem by Zsigmondy \cite{Zsigmondy1892} ensures that numbers of the form $p^k\pm 1$ with unbounded $k$ have arbitrarily large prime divisors, which means that each group $L_i$ contains elements of arbitrarily large prime orders. Let $\ell_i \in L_i$ be an element of a prime order $t$ chosen in the way that $t > |L:L_i|$. Notice that all elements of order $t$ in $L$ belong to $L_i$. Similarly as above, by \cite{Ellers1999}, every element in $L_i$ is a product of a bounded number of $L_i$-conjugates of $\ell_i$. That is, similarly as above, we may prove that $L$ is definable, without parameters, in $H$.

Now, $H = L = L_1 \times \cdots \times L_m$ is a direct product of finitely many non-abelian simple groups $L_i$ each of which is a group of Lie type over a locally finite field. Thus, to complete the proof of Theorem  \ref{th:definably-simple}, it is enough to prove that $m=1$.  Assume the contrary, let $m >1$. The group $C_H(C_H({L_1}))$ is a definable and normal subgroup of a definably simple group $H$. Clearly $C_H(L_1) \nleqslant C_H(C_H({L_1}))$ since $C_H(L_1)$ contains $L_2$. Hence $C_H(C_H({L_1}))$ is a proper non-trivial definable normal subgroup in a definably simple group $H$ -- a contradiction.  \hfill $\square$

\section{Finitary automorphism groups}
\label{sec:finitary}

In this section, we concentrate on the other line of thought of our paper. The key definitions of this part of the paper are given below in a general form.

\begin{definition}\label{def:finitary} Let $\mathcal{M}$ be an infinite algebraic structure with underlying set $M$. We say that an infinite group $A$ of automorphisms of $\mathcal{M}$ is \emph{finitary}, if
\begin{itemize}
 \item[{\rm (1)}] For every $\alpha \in A \smallsetminus \{1\}$, the substructure of fixed points $\mathop{\rm Fix}_\mathcal{M}(\alpha)$ is finite.

\item[{\rm (2)}] If $X$ is a non-empty definable subset in $M$ and $X$ is invariant under the action of a non-trivial automorphism $\alpha\in A$, then there exists an element $x \in X$ with a finite orbit $x^A$  {\rm(}equivalently,  the stabiliser\/ $\mathop{\rm Stab}_A(x)$ has a finite index in $A${\rm )}.
 \end{itemize}
 Under assumptions {\rm (1)} and {\rm (2)} above, we define
\begin{itemize}
\item[{\rm (3)}]  the \emph{locally finite core} $\mathcal{M}_*$ of $\mathcal{M}$ as a substructure of $\mathcal{M}$ with the underlying set
\[
M_* = \{\, m \in M \mid \text{ the orbit }\; m^A \text{ is finite.}\}.
\]
\end{itemize}
\end{definition}

Notice that  assumption (2)  of Definition \ref{def:finitary}, applied to $X=M$, ensures that $M_*$ is non-empty. Notice further that $M_*$ is infinite for otherwise $A$ would have a finite orbit $x^A$ on the definable $A$-invariant subset $X=M \smallsetminus M_*$ with $x$ not belonging to $M_*$ -- a contradiction.

An important example of finitary groups of automorphisms is provided by Frobenius maps. Let $G$ be a group of points over an algebraically closed field $K$ of characteristic $p > 0$ of a simple algebraic group defined over the field $\mathbb{F}_{p}$, and $\phi$ the automorphism of $G$ induced by the Frobenius map $x \mapsto x^{p}$ on $K$. Then it is well-known that the group $\langle \phi \rangle$ generated by $\phi$ is a finitary group of automorphisms.

Definition~\ref{def:finitary} together with the well-known Tarski-Vaught test \cite{Hodges1993} for elementary substructures allows us to prove that the locally finite core $\mathcal{M}_*$ is actually a locally finite elementary substructure of $\mathcal{M}$.

\begin{theorem}\label{elementary-substructure}
Let $\mathcal{M}$ be an algebraic structure and $A$ a finitary group of automorphisms of $\mathcal{M}$. Then, $\mathcal{M}_*$ is a locally finite elementary substructure of $\mathcal{M}$.
\end{theorem}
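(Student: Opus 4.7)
The strategy is to verify three claims in sequence: (i) $M_*$ is closed under the language and so is a substructure of $\mathcal{M}$; (ii) the substructure $\mathcal{M}_*$ is locally finite; and (iii) $\mathcal{M}_* \preceq \mathcal{M}$ via the Tarski--Vaught test. The unifying observation I would use throughout is that for any finite tuple $\bar{a} = (a_1,\ldots,a_n)$ from $M_*$, the pointwise stabiliser
\[
H_{\bar{a}} = \mathop{\rm Stab}_A(\bar{a}) = \bigcap_{i=1}^n \mathop{\rm Stab}_A(a_i)
\]
is a finite intersection of finite-index subgroups of $A$ and therefore has finite index in the infinite group $A$; in particular $H_{\bar{a}}$ is itself infinite and contains non-trivial automorphisms.

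For (i), the constants of the language are fixed by every automorphism, hence have singleton $A$-orbits and lie in $M_*$. If $f$ is a function symbol of arity $n$ and $m_1,\ldots,m_n \in M_*$, then the orbit of $f(m_1,\ldots,m_n)$ is contained in $\{f(m_1^\alpha,\ldots,m_n^\alpha) : \alpha\in A\}$, a set of cardinality at most $\prod_i |m_i^A| < \infty$. For (ii), given a finite tuple $\bar{a}$ from $M_*$, pick any non-trivial $\alpha \in H_{\bar{a}}$. The substructure $\langle \bar{a}\rangle$ consists of term values $t^{\mathcal{M}}(\bar{a})$, each of which is fixed by $\alpha$ since $\alpha$ fixes $\bar{a}$ pointwise. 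Thus $\langle \bar{a}\rangle \subseteq \mathop{\rm Fix}_{\mathcal{M}}(\alpha)$, which is finite by condition~(1) of Definition~\ref{def:finitary}.

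For (iii), by the Tarski--Vaught test it suffices to show that for every formula $\phi(x,\bar{y})$ and every tuple $\bar{a}$ from $M_*$, if $\mathcal{M} \models \exists x\, \phi(x,\bar{a})$ then some witness lies in $M_*$. Let $X = \{b \in M : \mathcal{M} \models \phi(b,\bar{a})\}$; this is a non-empty definable subset of $M$. Any non-trivial $\alpha \in H_{\bar{a}}$ preserves $X$ because it fixes the parameters $\bar{a}$. Condition~(2) of Definition~\ref{def:finitary} then yields an element $x\in X$ with finite $A$-orbit, i.e.\ $x \in M_*$, which is the required witness.

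The steps above are tightly matched to the conditions packaged in Definition~\ref{def:finitary}, so the content is largely bookkeeping; the one place that requires genuine care is the passage from a tuple $\bar{a}\subseteq M_*$ to a non-trivial $\alpha$ fixing it pointwise, which is precisely why finiteness of orbits in~(3) and infinitude of $A$ are imposed. Once that is in hand, local finiteness follows from condition~(1) and the Tarski--Vaught step follows from condition~(2), so no further obstacle is expected.
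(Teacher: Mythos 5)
Your proposal is correct and follows essentially the same route as the paper: finite intersections of finite-index stabilisers supply a non-trivial automorphism fixing any tuple from $M_*$, condition (1) then gives local finiteness, and condition (2) supplies the Tarski--Vaught witnesses. The only cosmetic difference is that you verify closure under function symbols by bounding the orbit of $f(m_1,\ldots,m_n)$ directly, whereas the paper deduces it from the finite-index stabiliser of the generators; both are fine.
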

\begin{proof}

First we check that $M_*$ with operations and relations inherited from $\mathcal{M}$ is a locally finite substructure of $\mathcal{M}$.

Let $x_1,\dots,x_n \in M_*$, and $\mathcal{X}= \langle x_1, \ldots, x_n \rangle$ the substructure generated by elements $x_1,\dots, x_n$ in $M$, that is, the minimal subset of $M$ which contains $x_1,\dots, x_n$ and is closed under all algebraic operations from $\mathcal{M}$. Then the stabilisers of $x_1,\dots,x_n$ in $A$ have finite indices in $A$ and their intersection
\[
B = {\rm Stab}_A(x_1) \cap \cdots \cap {\rm Stab}_A(x_n)
\]
also has a finite index in $A$, hence is not trivial. Here $B$ fixes every element in $X$, hence $A$-orbits of elements in $X$ are finite and $X\subseteq M_*$, which also proves that $M_*$ is closed under all operations from $\mathcal{M}$ and is therefore a substructure of $\mathcal{M}$. Moreover, $X \subseteq \mathop{\rm Fix}_M(B)$ is finite by the definition of finitary groups of automorphisms, hence $\mathcal{M}_*$ is locally finite.

Then, we shall show that every $\mathcal{M}_*$-definable non-empty subset $X\subseteq M$ has a point in $M_*$. This will prove the theorem by invoking the Tarski-Vaught Test.

Indeed since $X$ is a $M_*$-definable set it is of the form
\[
X=\{x \in M \mid \varphi(x, \overline{m})\},
\]
where $\overline{m}=(m_1,\dots,m_k) \in M_*$. Every $m_i$ has a finite $A$-orbit and therefore its stabiliser $S_i = {\rm Stab}_A(m_i)$ has finite index in $A$, hence the intersection
\[
S = {\rm Stab}_A(m_1)\cap \cdots \cap {\rm Stab}_A(m_k)
\]
of all $S_i$'s also has a finite index in $A$. Clearly, for all $i = 1,\ldots, k$, $S$ fixes all $m_i$'s. Now the set $X$ is $S$-invariant and $S$ has a finite orbit $O$ in $X$ such that the point-wise stabiliser ${\rm Stab}_A(O)$ has a finite index in $A$; hence $A$ also has a
finite orbit in $X$. Elements in this orbit $O$ belong to $M_*$ so $X \cap M_* \neq \emptyset$ as is claimed.

\end{proof}

Later, in Sections~\ref{sec:stable} and~\ref{section:fRM} , when applying Definition \ref{def:finitary} to groups, we shall follow standard notation and refer to stabilisers of points as \emph{centralisers} and write $C_A(x)$ instead of ${\rm Stab}_A(x)$ and  $C_X(\alpha)$ instead of $\mathop{\rm Fix}_X(\alpha)$, etc.

\section{Stable groups admitting a finitary group of automorphisms}\label{sec:stable}

In this section we combine our two lines of investigation. As a result we give a classification of definably simple stable groups in the specific case in which these groups admit finitary groups of automorphisms.

\medskip
\noindent
\textbf{Theorem \ref{stable-extension}}\emph{ Every infinite definably simple stable group admitting a finitary group of automorphisms is a Chevalley group over an algebraically closed field of positive characteristic.}
\medskip

Note that this theorem implies that, in presence of  a finitary  group of automorphisms, a definably simple stable group  is simple.

Our proof uses the CFSG.
\subsection{Proof of Theorem~\ref{stable-extension}}

Proof of Theorem~\ref{stable-extension} follows as we combine together Theorems~\ref{th:definably-simple} and~\ref{elementary-substructure}. Indeed, we may first observe that the following fact is a corollary of our earlier results:

\begin{theorem}\label{extension2}
Every infinite definably simple group $G$ of finite centraliser dimension admitting a finitary group of automorphisms is a simple group of Lie type over a field of positive characteristic.
\end{theorem}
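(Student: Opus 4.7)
The plan is to use the finitary action of $A$ to produce a well-behaved locally finite elementary substructure of $G$, apply Theorem~\ref{th:definably-simple} to that substructure, and then lift the classification back to $G$ by first-order transfer via Fact~\ref{Fact:thomas-Chevalley}.

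First I would form the locally finite core $G_*$ associated with $A$, which is infinite by the observation following Definition~\ref{def:finitary}. By Theorem~\ref{elementary-substructure}, $G_*$ is a locally finite elementary substructure of $G$, so $G_* \equiv G$. Next, the hypotheses on $G$ must be transferred to $G_*$: having centraliser dimension at most $n$ is expressed by a single first-order sentence forbidding centraliser chains of length $n+1$, and definable simplicity is a first-order schema with one axiom per formula $\varphi(x,\bar y)$, stating that if $\varphi(\,\cdot\,, \bar b)$ defines a normal subgroup then it defines $\{1\}$ or the whole group. Both properties therefore pass from $G$ to $G_*$.

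Thus $G_*$ is an infinite definably simple locally finite group of finite centraliser dimension, and Theorem~\ref{th:definably-simple} yields that $G_*$ is a simple group of Lie type of some fixed type $X$ over a locally finite field of characteristic $p>0$. To conclude, I would invoke Fact~\ref{Fact:thomas-Chevalley} (together with its analogue for twisted types) to see that the class $\{X(K) \mid K \text{ a field}\}$ is first-order axiomatisable. Since $G \equiv G_*$, the group $G$ satisfies the same axioms and is therefore of the form $X(F)$ for some field $F$; the characteristic of $F$ equals $p$ because, by Fact~\ref{ryten}, $X(F)$ interprets $F$ uniformly in parameters and the prime characteristic of the interpreted field is itself a first-order property.

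The main obstacle I foresee is the twisted case: Fact~\ref{Fact:thomas-Chevalley} is stated only for Chevalley types, whereas \emph{group of Lie type} in this paper also includes twisted and Suzuki/Ree groups. If a directly citable twisted version of Thomas's axiomatisation is not at hand, I would instead lean on the bi-interpretability supplied by Fact~\ref{ryten}: the underlying field $F_*$ of $G_*$ (together with a field automorphism in the Suzuki/Ree case) is interpretable in $G_*$ uniformly in parameters, the same interpretation scheme applied inside $G$ produces a field $F$ of characteristic $p$, and the reverse interpretation recovers $G$ as the appropriate (possibly twisted) group of Lie type over $F$, completing the proof.
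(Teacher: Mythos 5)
Your proposal is correct and follows essentially the same route as the paper: pass to the locally finite core $G_*\preceq G$ via Theorem~\ref{elementary-substructure}, transfer definable simplicity and finite centraliser dimension along the elementary embedding, apply Theorem~\ref{th:definably-simple} to $G_*$, and pull the classification back to $G$ using Facts~\ref{Fact:thomas-Chevalley} and~\ref{ryten} (the paper also cites Fact~\ref{point}). Your explicit treatment of the twisted types via Ryten's bi-interpretability just spells out what the paper handles implicitly by citing those same facts.
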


\begin{proof}
Let $G$ be an infinite definably simple  group of finite centraliser dimension admitting a finitary group of automorphisms $A$. Then the locally finite group $G_* \preceq G$ also has finite centraliser dimension and is definably simple.  By Theorem \ref{th:definably-simple}, $G_*$ is a  group of Lie type
over a locally finite field. Since $G$ is elementary equivalent to $G_*$, it is also a  group of Lie type over some field of positive characteristic in view of Facts~\ref{Fact:thomas-Chevalley},~\ref{point} and~\ref{ryten}.
\end{proof}

Theorem \ref{extension2} almost immediately leads to our result about stable groups.

Indeed, stable groups have finite centraliser dimension by Fact~\ref{Fact:BaldwinSaxl}; note further that  a field appearing in the statement of Theorem \ref{extension2} is interpretable in the group $G$ due to Facts~\ref{point} and~\ref{ryten}, and therefore Theorem \ref{stable-extension} follows from Theorem \ref{extension2} and Fact~\ref{duret}: Infinite locally finite stable fields are algebraically closed. We need only to recall that groups of Lie type over algebraically closed fields are Chevalley groups. This follows since algebraically closed locally finite fields are algebraic closures of finite prime fields but for any prime $p > 0$ there are no twisted Chevalley groups with underlying field $\bar{\mathbb{F}}_p$. This completes the proof of Theorem~\ref{stable-extension}.
\qed

It is worth mentioning that there exist definably simple stable groups which are not simple. The first example was given by Sela in \cite{Sela2013}, where he proved that the elementary theory of non-abelian free groups, $T_{fg}$, is stable. This provided an example since it was proven by several authors that a free group $F_n \models T_{fg}$ is definably simple (but not simple):

\begin{fact}[Bestiva and Feighn \cite{Feighn2010}; Kharlampovich and Myasnikov \cite{Myasnikov2013}; Perin, Pillay, Sklinos and Tent \cite{Perin2013}]\label{fact:defsimple} Any proper definable subgroup of a non-abelian free group $F_n$ is cyclic.
\end{fact}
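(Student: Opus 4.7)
The plan is to show that any proper definable subgroup $H$ of a non-abelian free group $F_n$ is cyclic, which by Nielsen-Schreier reduces to proving that $H$ has rank at most one. Assuming for contradiction that $H$ contains two non-commuting elements $a,b$, the subgroup $\langle a,b\rangle$ is a non-abelian free subgroup of $H$, and the goal becomes to derive a contradiction from the assumption that $H$ is simultaneously definable and a proper subgroup of $F_n$.

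The elementary ingredients I would collect first are the following. In $F_n$ the centraliser of any non-trivial element $g$ is the infinite cyclic group $\langle g_0\rangle$ generated by the unique primitive root of $g$; in particular maximal cyclic subgroups are uniformly definable as centralisers, and two elements commute if and only if they share a common root. Combined with Sela's stability of $\mathrm{Th}(F_n)$ and with Fact~\ref{Fact:BaldwinSaxl}, this gives a uniform bound on chain lengths of centralisers and so restricts the combinatorial complexity of definable families of subgroups. On its own, however, this model-theoretic input is not strong enough to conclude, since $F_n$ is rich in non-definable free subgroups of every rank.

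The substance of the argument must therefore come from the Diophantine geometry of free groups: Makanin-Razborov diagrams, limit groups, and JSJ decompositions, developed by Sela and independently by Kharlampovich-Myasnikov. The plan would be to write the defining formula $\varphi(x;\bar a)$ of $H$ as a projection of solutions of a system of equations and inequations, to stratify the solution set through the associated Makanin-Razborov tree, and then, for each stratum, to apply shortening arguments to the corresponding limit group in order to exhibit a modular automorphism that sends non-cyclic configurations outside $H$ while preserving the formula $\varphi$. From the hypothesis that $H$ contains $\langle a,b\rangle$ one would extract an orbit of such a non-abelian configuration that fills out $F_n$, forcing $H=F_n$.

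The hard part is precisely this last step: promoting the geometric information about solution sets of equations to a statement about the \emph{subgroup} $H$ that they define. This needs a fine understanding of algebraic closure in $F_n$ relative to $H$, and of how a formula can cut out a subgroup closed under multiplication. It is exactly where the three cited approaches diverge, Bestvina-Feighn via tree actions, Kharlampovich-Myasnikov via algebraic geometry over free groups, and Perin-Pillay-Sklinos-Tent via envelopes and model-theoretic homogeneity, and any honest proof must rely on machinery of comparable depth.
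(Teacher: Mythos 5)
The statement you are addressing is quoted in the paper as a \emph{Fact}, i.e.\ an external citation with no proof supplied, so there is no internal argument to compare yours against; it has to be judged on its own terms, and on those terms it is a research programme rather than a proof. The decisive claim --- that from a non-commuting pair $a,b\in H$ one can ``extract an orbit of a non-abelian configuration that fills out $F_n$, forcing $H=F_n$'' --- is precisely the content of the theorem, and it is announced rather than argued. Naming Makanin--Razborov diagrams, limit groups and shortening arguments does not discharge it: you never say what is being shortened, why the relevant modular automorphisms preserve the truth of $\varphi(x;\bar a)$ (in general they do not, since they need not fix the parameters $\bar a$), or why the image of a point of $H$ under such an automorphism stays in the set defined by $\varphi$. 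Note also that, as written, your argument would ``prove'' that every definable \emph{subset} containing a non-abelian free subgroup is all of $F_n$, which is false (take $F_n\smallsetminus\{g\}$ for any $g\neq 1$); so an essential use of the hypothesis that $H$ is a subgroup is missing, and it is exactly the part you defer.

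The proofs in the cited sources hinge on a structural dichotomy that your sketch does not isolate: every definable subset of a non-abelian free group is either \emph{negligible} (contained, up to a bounded error and finitely many exceptions, in finitely many cosets of cyclic subgroups) or \emph{co-negligible} (its complement is negligible). This dichotomy is where the Makanin--Razborov/shortening machinery (Sela), the tree arguments (Bestvina--Feighn), and the forking/homogeneity analysis (Perin--Pillay--Sklinos--Tent) are actually spent. Once you have it, the subgroup statement is short: a negligible subgroup is commensurable with a cyclic subgroup and hence cyclic, since $F_n$ is torsion-free; a co-negligible proper subgroup is impossible, because a coset $gH$ disjoint from $H$ would be a translate of a co-negligible set sitting inside the negligible complement of $H$. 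If you want to turn your outline into a proof, this dichotomy is the lemma you must state and prove or cite; by contrast, the stability-theoretic input you foreground (Sela's stability theorem, Baldwin--Saxl and centraliser chains) plays essentially no role in the argument.
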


\section{Groups of finite Morley rank admitting a finitary group of automorphisms}\label{section:fRM}

In this section we prove the analogue of Theorem~\ref{stable-extension} without the use of the CFSG in the specific case of groups of finite Morley rank.

\subsection{Groups of finite Morley rank and the Cherlin-Zilber Conjecture}

We consider groups of finite Morley rank via a \emph{ranked definable universe} following the notations of \cite{ABC, Bor2}.  This way we define a \emph{definable universe} $\mathcal{U}$ to be a set of sets closed under taking singletons (one-element sets), Boolean operations, direct products, projections and quotients (by equivalence relations in the universe). Further, we call definable universe $\mathcal{U}$ \emph{ranked} if it carries a rank function as defined by Borovik and Poizat \cite{Poizat2001a}. This means that we can assign, to every set $S  \in \mathcal{U}$, a natural number which behaves as a ``dimension'' of the corresponding set. A group $G$ is \emph{ranked} if its base set and graphs of multiplication and inversion belong to a ranked universe. It is well-known \cite{Bor2, Poizat2001a} that a simple ranked group is $\omega$-stable of finite Morley rank.

The motivation for study of infinite simple groups of finite Morley rank arises from the famous Cherlin-Zilber Conjecture made in the 1970's \cite{CHERLIN19791,Zilber1977}:

\begin{conjecture}[Cherlin-Zilber Conjecture]Simple infinite groups of finite Morley rank are algebraic groups over algebraically closed fields.
\end{conjecture}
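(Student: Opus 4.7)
The plan is to adapt the \emph{Borovik program}, which organises the classification of infinite simple groups of finite Morley rank according to the structure of a Sylow $2$-subgroup and imitates, wherever possible, the architecture of the Classification of Finite Simple Groups while staying strictly inside the ranked universe. Let $G$ be an infinite simple group of finite Morley rank, and let $S$ be a Sylow $2$-subgroup of $G$, which exists and is well-defined up to conjugacy in this setting. The first step is to split into four cases according to whether the connected component $S^\circ$ is trivial, a definable divisible abelian $2$-group (a $2$-torus), a definable nilpotent $2$-group of bounded exponent (a $2$-unipotent group), or a non-trivial extension of one by the other: the \emph{degenerate}, \emph{odd}, \emph{even}, and \emph{mixed} types respectively. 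I would then treat each case separately and in each case aim to identify $G$ with a Chevalley group over an algebraically closed field, the field being recovered internally via a Zilber-style field configuration from the action of a Cartan-like torus on a unipotent root subgroup.

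The most tractable case to handle is the \emph{even type}. Here I would develop a theory of parabolic subgroups, root $SL_2$-subgroups, and a Weyl group generated by $2$-unipotent configurations, then invoke a Curtis--Tits / Phan-style generic identification theorem to recognise $G$ as a Chevalley group in characteristic $2$; Zilber's field theorem, applied to the action of a torus on a root subgroup, supplies the algebraically closed base field. For the \emph{mixed type}, the plan is to derive a contradiction by producing, via the interaction of a $2$-unipotent block and a $2$-torus, two definable fields of different characteristics acting compatibly on a common definable abelian section, which is impossible.

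For the \emph{odd type}, the plan is to imitate the Aschbacher component analysis: develop signalizer functor theory in the ranked universe to force the existence of a standard quasisimple subgroup of odd type, induct to identify this standard component with a known Chevalley group, and then run a generic identification theorem to extend the recognition to $G$ itself. The field is again extracted, via Zilber, from a definable Cartan subgroup acting on a root subgroup.

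The main obstacle, by a very wide margin, will be the \emph{degenerate type} --- groups in which no involutions are available, so that the entire $2$-local machinery collapses. Here I would attempt to replace the missing Sylow $2$-theory by a theory of generic elements: show that semisimple elements with connected self-normalising abelian centralisers are generic, use the descending chain condition on definable subgroups together with the theory of definable envelopes and Borel--Tits-like genericity to define maximal tori and a Weyl-type group acting on them, and try to interpret an infinite field from this action in order to contradict the absence of $2$-torsion. This is precisely the point at which every known approach has stalled for four decades; any honest completion would need a genuinely new idea --- either a unipotence theory valid in characteristic $0$ and in the absence of involutions, or a direct model-theoretic obstruction to the existence of a degenerate-type simple group of finite Morley rank --- and I would flag this step as the sole but decisive gap in the program above.
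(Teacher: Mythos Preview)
The statement you are attempting to prove is not a theorem of the paper at all: it is the Cherlin--Zilber \emph{Conjecture}, stated as such and left open. The paper does not offer a proof of it, and there is accordingly no ``paper's own proof'' to compare your proposal against. What the paper \emph{does} prove is the much narrower Theorem~\ref{theorem:fRM}: if an infinite simple group of finite Morley rank admits a \emph{finitary group of automorphisms}, then it is a Chevalley group over an algebraically closed field of positive characteristic. That extra hypothesis is doing essentially all the work --- it supplies a locally finite elementary substructure $G_*\preceq G$ via Theorem~\ref{elementary-substructure}, which is then identified by Fact~\ref{fact:locally-finite} and transferred back to $G$ by elementary equivalence and Fact~\ref{Fact:thomas-Chevalley}. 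None of the four-type Sylow analysis you sketch is invoked or needed for this special case.

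Your outline is, in fact, a fair summary of the Borovik program toward the full conjecture, and you are candid that the degenerate-type case is a genuine, unresolved gap. But a proof proposal that openly flags ``the sole but decisive gap'' is not a proof: as you yourself note, the degenerate case has resisted every attack for four decades, and nothing in your plan (generic semisimple elements, definable envelopes, a hoped-for field interpretation) goes beyond what has already been tried without success. So the honest assessment is that your proposal is a research outline for an open problem, not a proof of the stated conjecture, and it should not be presented as one.
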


A detailed discussion of the Cherlin-Zilber Conjecture, as well as development of relevant techniques and purely algebraic axiomatisation of the concept of (finite) Morley rank can be found in \cite{Bor2} and \cite{ABC}. In what follows, we prove that the Cherlin-Zilber Conjecture holds in the specific case when a simple group of finite Morley rank admits a finitary group of automorphisms, that is, we prove the following theorem (without the use of the CFSG).

\medskip
\noindent
\textbf{Theorem 3.} \emph{An infinite simple group of finite Morley rank admitting a finitary group of automorphisms is a Chevalley group over an algebraically closed field of positive characteristic.}
\medskip

\subsubsection{Background results}

We start by listing few properties of groups of finite Morley rank that we use in our analysis.

The following result states that for groups the property of being of finite Morley rank is preserved under elementary equivalence. It was proven by Poizat; see the introduction of \cite{Poizat2001a}.

\begin{fact}[Poizat \cite{Poizat2001a}]\label{defmorleyrank} Being a ranked group is a first-order axiomatisable property.
\end{fact}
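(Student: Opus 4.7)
The plan is to show that if $G$ is a ranked group and $H \equiv G$ in the language $\mathcal{L}$, then $H$ is also ranked; equivalently, the class of ranked groups (relative to a fixed complete theory) is elementarily axiomatisable. My strategy is to encode the Borovik--Poizat axioms for a rank function as a scheme of first-order sentences all of which belong to $\mathrm{Th}(G)$.

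The starting point, and main technical input, is the definability of Morley rank in families for $\omega$-stable groups: for every $\mathcal{L}$-formula $\varphi(x,\bar y)$ there is a finite bound $n_\varphi \in \N$ together with formulas $\psi_{\varphi,0}(\bar y),\dots,\psi_{\varphi,n_\varphi}(\bar y)$ such that
\[
G \models \psi_{\varphi,k}(\bar a) \iff \mathrm{rk}(\varphi(x,\bar a)) = k.
\]
This is where the ranked (equivalently, totally transcendental) group structure enters essentially: the descending chain condition on uniformly definable families forces the rank on instances $\varphi(x,\bar a)$ to be bounded, and the stabiliser machinery for $\omega$-stable groups produces the defining formulas $\psi_{\varphi,k}$. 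I would take this step as the principal technical claim and base everything else on it.

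Given the witnesses $(\psi_{\varphi,k})_{\varphi,k}$, each Borovik--Poizat axiom translates into a concrete first-order sentence. Monotonicity becomes a sentence asserting that if $\varphi(x,\bar b) \subseteq \chi(x,\bar c)$ then $\sum_k k \cdot \mathbf{1}_{\psi_{\varphi,k}}(\bar b) \leq \sum_k k \cdot \mathbf{1}_{\psi_{\chi,k}}(\bar c)$, written out as a finite disjunction over possible values of the bounded ranks. Additivity along a definable surjection is expressible as a sentence relating the $\psi$-formulas for the three formulas defining domain, image and fibers. Elimination of the infinite quantifier is a uniform bound: for each $\varphi$, either $\psi_{\varphi,0}(\bar a)$ fails (so rank is positive) or $\varphi(x,\bar a)$ has at most $N_\varphi$ elements, where $N_\varphi$ is itself a fixed integer depending on $\varphi$ but not on $\bar a$. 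Each such sentence lies in $\mathrm{Th}(G)$ by construction, so the collection $T_{\mathrm{rk}} \subseteq \mathrm{Th}(G)$ is a first-order scheme.

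To close the loop, for any model $H \models T_{\mathrm{rk}}$ I would define $\mathrm{rk}_H(\varphi(x,\bar a)) = k$ iff $H \models \psi_{\varphi,k}(\bar a)$; the axioms in $T_{\mathrm{rk}}$ translate directly into a verification that this function satisfies the Borovik--Poizat axioms on $H$, so $H$ is ranked. In particular, every $H \equiv G$ is ranked. The main obstacle is securing the definability of rank in families together with the uniform bounds $n_\varphi$ and $N_\varphi$; once this structural result (due to Poizat, cf.\ \cite{Poizat2001a}) is in hand, the translation into axioms is systematic bookkeeping and the elementary equivalence argument is routine.
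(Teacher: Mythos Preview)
The paper does not prove this fact; it is cited from Poizat without argument, so there is no in-paper proof to compare against. Your outline is a reasonable sketch, and the target you set---closure of rankedness under elementary equivalence---is exactly what the paper needs at the point of use. (Note that ``first-order axiomatisable'' is being read in this sense; the class of all ranked groups across all theories is not closed under ultraproducts, since ranks can grow without bound.)

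Two points deserve tightening. First, your function $\mathrm{rk}_H$ is defined on formula--parameter pairs, not on definable sets; for it to be a rank on the definable universe you must include in $T_{\mathrm{rk}}$, for every pair of formulas $\varphi,\chi$ and every $k$, the sentence
\[
\forall\bar y\,\forall\bar z\;\Big(\forall x\,(\varphi(x,\bar y)\leftrightarrow\chi(x,\bar z))\rightarrow(\psi_{\varphi,k}(\bar y)\leftrightarrow\psi_{\chi,k}(\bar z))\Big),
\]
and you must also treat quotients by definable equivalence relations, since the ranked universe is required to be closed under these. Second, your justification for the ``main technical input'' is off: definability of rank in families is one of the Borovik--Poizat axioms for a ranked universe, so once $G$ is assumed ranked the formulas $\psi_{\varphi,k}$ come for free---``stabiliser machinery for $\omega$-stable groups'' is not what produces them. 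The cleaner argument, and essentially Poizat's, is to invoke his theorem that a group is ranked if and only if it is $\omega$-stable of finite Morley rank, and then observe that $\omega$-stability and finiteness of Morley rank are each invariants of the complete theory; this bypasses the bookkeeping entirely.
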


The following crucial fact for groups of finite Morley rank follows from Zilber's Indecomposability Theorem \cite{Zilber1977}.

\begin{fact}[\cite{Marker2002}, Corollary 7.3.7]\label{defsimple}
A non-abelian simple group of finite Morley rank is definably simple.
\end{fact}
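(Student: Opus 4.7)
The implication from simple to definably simple is, at face value, tautological: every definable normal subgroup is in particular a normal subgroup, so abstract simplicity of $G$ forces any definable normal $N \trianglelefteq G$ to be $\{1\}$ or $G$, which is exactly definable simplicity. The non-abelian hypothesis in the statement is needed only to rule out the degenerate cyclic case $\mathbb{Z}/p\mathbb{Z}$, which is simple and definably simple but incompatible with the non-abelian setting assumed in the rest of the paper.

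Since the fact is cited as a consequence of Zilber's Indecomposability Theorem, I suspect the intended argument extracts additional structural information along the way, and this is the form of the proof I would present. First I would reduce to the connected case: the connected component $G^\circ$ is a definable normal subgroup of finite index in $G$, so abstract simplicity forces $G = G^\circ$ (the alternative $G^\circ = \{1\}$ would make $G$ finite, and non-abelian finite simple groups are definably simple by direct inspection of their subgroup lattices). Then, for a fixed element $g \neq 1$ in the now connected group $G$, the conjugacy class $g^G$ is a definable subset of $G$, and the set $X = g^G \cdot (g^G)^{-1}$ is definable, contains the identity, and is indecomposable because it is the image of the connected group $G \times G$ under a definable map. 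Zilber's Indecomposability Theorem then guarantees that the subgroup $\langle X \rangle$ generated by $X$ is a definable connected subgroup of $G$. It is normal (as a union of $G$-conjugates of elements built from the single conjugacy class $g^G$) and non-trivial, so abstract simplicity of $G$ forces $\langle X \rangle = G$. Hence the normal closure of any non-trivial element already coincides with $G$ as a definable subgroup, which in particular gives the required definable simplicity.

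The main obstacle I foresee is not mathematical but organisational: cleanly setting up the application of Zilber's theorem and checking that the chosen indecomposable set yields a genuinely normal, non-trivial definable subgroup. Both verifications are standard in the finite Morley rank machinery, and the substantive content is the appeal to Zilber's theorem itself, which I would cite as a black box. If one is willing to settle for the tautological reading of the statement, then no obstacle at all remains and the proof collapses to the single observation with which the first paragraph opens.
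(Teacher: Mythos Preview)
Your first paragraph is correct and complete for the statement as literally written: abstract simplicity trivially implies definable simplicity. The paper gives no proof of this Fact---it is simply cited from Marker---so there is no argument in the paper to compare against directly.

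That said, your instinct that something deeper is intended is well founded. The paper actually invokes this Fact in the \emph{converse} direction: in the proof of Theorem~\ref{theorem:fRM} one knows only that $G_*$ is definably simple (inherited by elementary equivalence) and concludes that $G_*$ is simple. Marker's Corollary~7.3.7 is precisely this non-trivial implication, and it does require Zilber's Indecomposability Theorem. So the Fact is stated with the arrows reversed relative to how it is used.

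Your second paragraph heads in the right direction for that converse, but contains a genuine gap. The claim that $X = g^G \cdot (g^G)^{-1}$ is indecomposable ``because it is the image of the connected group $G \times G$ under a definable map'' is false in general: take $K$ an algebraically closed field, $H = K$, $G = K^2$, and $f(0) = (0,1)$, $f(x) = (x,0)$ for $x \neq 0$; then $f(H)$ meets exactly two cosets of $K \times \{0\}$. Nor are products of indecomposable sets indecomposable. The standard argument instead shows that each individual conjugacy class $g^G$ is indecomposable---using that the intersection of all definable $K$ with $g^G/K$ finite is \emph{normal} (since $g^G$ is conjugation-invariant), so the connected group $G$ acts trivially on the finite quotient---and then applies Zilber's theorem to the conjugation-invariant family $\{a^{-1} \cdot a^G : a \in g^G \cup (g^{-1})^G\}$ to obtain a definable normal subgroup. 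Definable simplicity (not abstract simplicity) then forces this subgroup to be all of $G$, giving simplicity.
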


Finally, we comment the classification result for locally finite groups of finite Morley rank. It was first proven by Thomas \cite{thomas21983} who used the CFSG. Later results by Borovik \cite{MR1362813}, and by Alt{\i}nel, Borovik and Cherlin \cite{ABC} removed the reference to the CFSG, thus the following fact does not use the CFSG.

\begin{fact} \label{fact:locally-finite} A locally finite simple group of finite Morley rank  is a Chevalley group over an algebraically closed field of positive characteristic.
\end{fact}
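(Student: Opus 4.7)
The plan is to prove that an infinite locally finite simple group $G$ of finite Morley rank is a Chevalley group over an algebraically closed field of positive characteristic, without invoking the CFSG. The argument combines the general structure theory of groups of finite Morley rank (the Borovik program) with the very restrictive local finiteness hypothesis.

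First I would note that finite Morley rank makes $G$ $\omega$-stable and in particular imposes the descending chain condition on centralisers, so $G$ has finite centraliser dimension. Being locally finite, simple and infinite, $G$ contains involutions: this one extracts from a Kegel cover of $G$ together with the Feit--Thompson theorem applied to the non-abelian finite simple sections of the cover. In particular, $G$ cannot be of \emph{degenerate type} in the terminology of the Borovik program.

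Next I would analyse the Sylow $2$-subgroup $S$ of $G$. For a group of finite Morley rank, $S^{\circ}$ is a central product of a divisible \emph{$2$-torus} and a nilpotent $2$-group of bounded exponent, which produces the usual trichotomy of \emph{even}, \emph{odd} and \emph{mixed} type. The Alt{\i}nel--Borovik--Cherlin monograph \cite{ABC}, whose proof does not invoke the CFSG, rules out mixed type and, in even type, identifies $G$ as a Chevalley group over an algebraically closed field of characteristic $2$. The odd type case, under the extra hypothesis of local finiteness, is handled by Borovik's earlier analysis in \cite{MR1362813} and identifies $G$ as a Chevalley group over an algebraically closed field of odd characteristic.

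Having identified $G$ as a group of Lie type over some field $K$ interpretable in $G$, I would observe that $K$ inherits local finiteness from $G$ (any finite subset of $K$ is coded by finitely many elements of $G$, whose generated subgroup is finite) and stability from $\omega$-stability of $G$, so by Fact~\ref{duret} the field $K$ is algebraically closed. Being locally finite and algebraically closed, $K = \overline{\mathbb{F}}_p$ for some prime $p$, and since no twisted Chevalley groups exist over $\overline{\mathbb{F}}_p$, we conclude that $G$ is a Chevalley group over $K$, as required. The main obstacle is of course the Borovik program itself: removing the CFSG from the even-type classification required the full Alt{\i}nel--Borovik--Cherlin monograph, and the locally finite odd-type case requires Borovik's delicate $2$-local analysis; these are invoked in this sketch as black boxes.
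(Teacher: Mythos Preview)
The paper does not prove this statement; it is recorded as a Fact, with the CFSG-free version attributed to Borovik \cite{MR1362813} and Alt{\i}nel--Borovik--Cherlin \cite{ABC}, and no further argument is given. Your sketch is an accurate high-level account of how those two cited works combine --- the trichotomy by Sylow $2$-type, \cite{ABC} disposing of mixed type and identifying the even-type case, and \cite{MR1362813} handling the locally finite odd-type case --- so it matches what the paper is invoking as a black box.

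One redundancy worth noting: your final paragraph is unnecessary. The conclusions of \cite{ABC} and \cite{MR1362813} already identify $G$ directly as a Chevalley group over an \emph{algebraically closed} field; there is no intermediate stage at which one only knows that $G$ is ``of Lie type over some field $K$'' and must then appeal to Fact~\ref{duret} to upgrade $K$. You can simply stop once those two references have done their work.
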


\subsubsection{Proof of Theorem~\ref{theorem:fRM}}

Here, $G$ refers to an infinite simple group of finite Morley rank with a finitary group of automorphisms $A$. In view of Theorem \ref{elementary-substructure}, $G_* \preceq G$ and is therefore also a group of finite Morley rank by Fact~\ref{defmorleyrank}.

Simplicity of $G_*$ follows immediately from properties of groups of finite Morley rank we presented in the previous subsection. Since $G_*$ is elementary equivalent to $G$ it inherits the property of being definably simple. But then, by Fact~\ref{defsimple}, $G_*$ is indeed simple. Moreover, it immediate follows from Fact~\ref{fact:locally-finite} that $G_*$ is a Chevalley group over algebraically closed field of positive characteristic.
Theorem~\ref{theorem:fRM} follows now from the elementary embedding $G_* \preceq G$ together with the Fact~\ref{Fact:thomas-Chevalley}.
\qed \\

We should mention that Theorem~\ref{theorem:fRM} can be also proven using purely group theoretic methods using the following strategy. First one can shown that $G_*$ is a simple  locally finite group of finite Morley rank, thus a Chevalley group over an algebraically closed field \cite{ ABC, MR1362813}. Then it is possible to expand a specific configuration of the Curtis-Phan-Tits-Lyons Theorem \cite[Theorem~1.24.2]{GLS} present in $G_*$, from $G_*$ to $G$, and finally apply the Curtis-Phan-Tits-Lyons Theorem to $G$. The model theoretic approach we gave here is shorter and more transparent.

\subsection*{Acknowledgements}
The author is thankful to Alexandre Borovik for proposing the topic, for numerous helpful discussions and for precious suggestions on how to improve preliminary versions, and to Adrien Deloro for noticing a gap in the original proof of an earlier and weaker version of Theorem \ref{stable-extension} and for valuable comments and advice. Author also wants to thank the referee for valuable comments and suggestions to improve the paper.

The author is funded by the Vilho, Yrj\"{o} and Kalle V\"{a}is\"{a}l\"{a} Foundation of the Finnish Academy of Science and Letters.

\addcontentsline{toc}{section}{References}    
\bibliographystyle{plain}
\bibliography{Ulla_5March19}

\end{document}